\newcommand{\dom}{\mathrm{dom}}
\def\spose#1{\hbox to 0pt{#1\hss}}
\def\text #1{\hbox{\quad#1\quad}}
\def\nthinsp{\mskip -2   mu}
\def\A{_{\scriptscriptstyle A}}
\def\F{_{\scriptscriptstyle F}}
\def\R{_{\scriptscriptstyle R}}
\def\superstar{^{\raise 0.5pt\hbox{$\nthinsp *$}}}
\def\SUPERSTAR{^{\raise 0.5pt\hbox{$*$}}}
\def\lamstarT {\lambda^{\raise 0.5pt\hbox{$\nthinsp *$}T}}
\def\hbar{\skew{4.2}\bar h}
		\def\bkE{{\rm I\kern-.17em E}}
		\def\bk1{{\rm 1\kern-.17em l}}
		\def\bkD{{\rm I\kern-.17em D}}
		\def\bkR{{\rm I\kern-.17em R}}
		\def\bkP{{\rm I\kern-.17em P}}
		\def\bkY{{\bf \kern-.17em Y}}
		\def\bkZ{{\bf \kern-.17em Z}}
		\def\beq{\begin{eqnarray}}
		\def\bc{\begin{center}}
		\def\be{\begin{enumerate}}
		\def\bi{\begin{itemize}}
		\def\bS{\begin{slide}}
		\def\ec{\end{center}}
		\def\ee{\end{enumerate}}
		\def\ei{\end{itemize}}
		\def\eS{\end{slide}}
		\def\eeq{\end{eqnarray}}
	\def\cp2problem#1#2#3#4{\fbox
		 {\begin{tabular*}{0.9\textwidth}
			{@{}l@{\extracolsep{\fill}}l@{\extracolsep{6pt}}l@{\extracolsep{\fill}}c@{}}
				#1 & & $#4 $ 
			\end{tabular*}}}
		\renewcommand{\emph}[1]{\textbf{#1}}
		\def\bkE{{\rm I\kern-.17em E}}
		\def\bk1{{\rm 1\kern-.17em l}}
		\def\bkD{{\rm I\kern-.17em D}}
		\def\bkR{{\rm I\kern-.17em R}}
		\def\bkP{{\rm I\kern-.17em P}}
		\def\bkZ{{\bf{Z}}}
\newcommand {\beeq}[1]{\begin{equation}\label{#1}}
\newcommand {\eeeq}{\end{equation}}
\newcommand {\bea}{\begin{eqnarray}}
\newcommand {\eea}{\end{eqnarray}}
\def\texitem#1{\par\smallskip\noindent\hangindent 25pt
               \hbox to 25pt {\hss #1 ~}\ignorespaces}
\newtheorem{definition}{Definition}{\it}{}
{\it}{}
{\it}{}
{\it}{}
\newtheorem{lemma}{Lemma}{\it}{}
\newtheorem{theorem}{Theorem}{\it}{}
\newtheorem{remark}{Remark}{\it}{}
{\it}{}
\newtheorem{standing}{Standing Assumption}
\def\R{\mathbb{R}}
\def\N{\mathbb{N}}
\def\F{\mathcal{F}}
\def\argmin{\mathop{\rm argmin}}
\def\x{\bs{x}}
\def\y{\bs{y}}
\def\J{\mathrm{J}{}}
\def\zer{\operatorname{zer}}
\def\nc{\mathrm{N}}
\def\A{\mc{A}}
\def\ber{\begin{eqnarray}}
\def\eer{\end{eqnarray}}
\def\bers{\begin{eqnarray*}}
	\def\eers{\end{eqnarray*}}
\def\be{\begin{equation}}
\def\ee{\end{equation}}
\def\1{{\bf 1}}
\newcommand{\bs}{\boldsymbol}
\newcommand{\mc}{\mathcal}
\newcommand{\col}{\mathrm{col}}
\renewcommand{\emph}{\textit}
\def\fix{\mathrm{fix}}
\newcommand{\0}{\bs 0}
\renewcommand{\iff}{\; \Leftrightarrow \;}
\newcommand{\Id}{\mathrm{Id}}
\newcommand{\alphamax}{\alpha_{\textnormal{max}}}
\newcommand{\gra}{\operatorname{gra}}
\newacronym{GNEP}{GNEP}{generalized Nash equilibrium problem}
\newacronym{NE}{NE}{Nash equilibrium}
\newacronym{NEP}{NEP}{Nash equilibrium problem}
\newacronym{GNE}{GNE}{generalized Nash equilibrium}
\newacronym{v-GNE}{v-GNE}{variational \gls{GNE}}
\newacronym{ISS}{ISS}{Input-to-state-stable}
\newacronym{PPPA}{PPPA}{preconditioned proximal-point algorithm}
\newacronym{PPA}{PPA}{proximal-point algorithm}
\newacronym{VI}{VI}{variational inequality}
\newacronym{GAE}{GAE}{generalized aggregative equilibrium}
\newacronym{v-GAE}{v-GAE}{variational \gls{GAE}}
\newacronym{KKT}{KKT}{Karush-Kuhn-Tucker}
\newacronym{FQNE}{FQNE}{firmly quasinonexpansive}
\newacronym{FNE}{FNE}{firmly nonexpansive}
\DeclareSymbolFont{myletters}{OML}{ztmcm}{m}{it}
\DeclareMathSymbol{\uplambda}{\mathord}{myletters}{"15}
\def\QEDhereeqn{\eqno\let\eqno\relax\let\leqno\relax\let\veqno\relax\hbox{\QED}}
\def\QEDopenhereeqn{\eqno\let\eqno\relax\let\leqno\relax\let\veqno\relax\hbox{\QEDopen}}
\title{\LARGE \bf
A fully-distributed proximal-point algorithm for Nash equilibrium seeking  with linear convergence rate
}
\author{Mattia Bianchi, Giuseppe Belgioioso, and Sergio Grammatico
\thanks{\noindent M. Bianchi and S. Grammatico are with the Delft Center for Systems and Control (DCSC), TU Delft, The Netherlands. G. Belgioioso is with the Automatic Control Laboratory, Swiss Federal Institute of Technology (ETH) Z\"{u}rich, Switzerland. E-mail addresses: \texttt{\{m.bianchi, s.grammatico\}@tudelft.nl}, \texttt{gbelgioioso@ethz.ch}. This work was partially supported by NWO  (project OMEGA,  613.001.702) and by the ERC (project COSMOS, 802348).} 
}
\begin{document}

\maketitle
\thispagestyle{empty}
\pagestyle{empty}

\begin{abstract}
	We address the Nash equilibrium  problem in a  partial-decision information scenario, where each agent can only observe the actions of some neighbors, while its cost possibly depends on the strategies of other agents. Our main contribution is the  design of a fully-distributed, single-layer, fixed-step  algorithm, based on a proximal best-response augmented with consensus terms. To derive our algorithm, we follow an operator-theoretic approach. 
	First, we recast the Nash equilibrium problem as that of finding a zero of a monotone operator. Then, we demonstrate that the resulting inclusion can be solved in a fully-distributed way via a proximal-point method, thanks to the use of a novel preconditioning matrix.
Under strong monotonicity and Lipschitz continuity of the game mapping, we prove linear convergence of our algorithm to a Nash equilibrium.
Furthermore, we show that our method outperforms the fastest  known gradient-based schemes, both in terms of guaranteed convergence rate, via theoretical analysis, and in practice, via numerical simulations. 
%
%
\end{abstract}

\section{Introduction}\label{sec:introduction}
\gls{NE} problems have received increasing attention with the spreading of networked systems, due to the numerous engineering applications, including communication networks \cite{Palomar_Eldar_Facchinei_Pang_2009}, 
formation control \cite{Stankovic_Johansson_Stipanovic_2012}, charge scheduling of electric vehicles \cite{Grammatico2017} and  demand response in competitive markets \cite{Li_Chen_Dahleh_2015}. 
These scenarios are characterized by the presence of   multiple selfish decision-makers, or agents, that aim at optimizing their individual, yet inter-dependent, objective functions. 
From a game-theoretic perspective, one of the challenges 
is to assign to the agents behavioral rules that eventually ensure the attainment of a \gls{NE}, a joint action from which no agent has interest to unilaterally deviate.

\emph{Literature review:}
Typically, \gls{NE} seeking algorithms are designed under the assumption that each agent can access the decisions of all the competitors, for example  in the presence of a coordinator that broadcasts the data to the network \cite{Yu_VanderSchaar_Sayed_2017,BelgioiosoGrammatico_ECC_2018,Shamma_Arslan_2005,YiPavel2019}. 
 However the existence of a central node with bidirectional communication with all the agents is impractical for many applications  \cite{Ghaderi_2014,Bimpikis2014}. One example is the Nash-Cournot  competition model described in \cite{Koshal_Nedic_Shanbag_2016}, where the  profit of each of a group of firms depends not only on its own production, but also on the whole amount of sales, a quantity not directly accessible by any of the firms.
This motivates the development of fully-distributed algorithms, which allow to compute \glspl{NE} relying on local data  only. 
Two main approaches have been proposed, corresponding to  two different information structures. For games where each agent can measure its own cost functions, pay-off based schemes were developed that do not require peer-to-peer communication  \cite{Stankovic_Johansson_Stipanovic_2012}, \cite{FrihaufKrsticBasar_2012}. Instead, we consider the so-called \emph{partial-decision information} scenario, where the  agents hold an analytic expression of their own cost functions, but  they are unable to evaluate the actual values, since they cannot access the strategies of all the competitors. To remedy the lack of knowledge, the agents
engage in nonstrategic information exchange
with some neighbors on a network;  from the  data received, they can estimate the strategies of all other agents, and eventually reconstruct the true values. 
This setup has only been introduced very recently. Most of the results available  resort to (projected) gradient 
and consensus dynamics,
 both in continuous time \cite{DePersisGrammatico2018,GadjovPavel2018}, and discrete time.
For the discrete time case, 
early works \cite{Koshal_Nedic_Shanbag_2016},  \cite{Salehisadaghiani_Pavel_GOSSIP}, focused on algorithms with vanishing step sizes, which typically result in slow convergence. 
More recently, fixed-step schemes were introduced  in \cite{SalehisadaghianiWeiPavel2019,TatarenkoShiNedic2018,Pavel2018}, building on a restricted monotonicity property, first revealed in \cite{GadjovPavel2018}.
The drawback is that, due to the partial-decision information assumption, small step sizes have to be chosen,  affecting the speed of convergence.
Of particular interest for this paper is the technique developed by Pavel in \cite{Pavel2018}, that characterized the equilibria of a (generalized) game as the zeros of a monotone operator.  The operator-theoretic approach is very elegant and convenient, since several splittings methods are already well established to solve monotone inclusions \cite[§26]{Bauschke2017}.  For example, the authors of  \cite{YiPavel_PPP_2019} adopted a \gls{PPPA}; yet, this results in a double-layer scheme, where the agents have to communicate multiple times to solve (inexactly) a subgame, at each step. Similarly, the proximal best-response dynamics proposed in \cite{Lei_Shanbag_CDC2018} for stochastic games require an increasing number of data transmissions per iteration. 

\emph{Contribution:} Motivated by the above, in this paper we further exploit the restricted monotonicity property  used in \cite{SalehisadaghianiWeiPavel2019,TatarenkoShiNedic2018,Pavel2018} to solve Nash equilibrium problems under partial-decision information. Specifically:
\begin{itemize}[leftmargin=*,topsep=0.5pt]
	\item We derive a simple, fully-distributed \gls{PPA}, that is a proximal best-response augmented with consensual terms. Thanks to the use of a novel preconditioning matrix, our algorithm is single-layer, i.e., it requires only one communication per iteration.  To the best of our knowledge, our \gls{PPPA} is the first non-gradient based algorithm with this feature (§\ref{sec:distributedGNE}-§\ref{sec:convergence});
	\item
	We prove global linear convergence of our algorithm to a NE, under strong monotonicity and Lipschitz continuity of the game mapping,
	by providing a general result for the  \gls{PPA} of restricted strongly monotone operators  (§\ref{sec:convergence});
	\item We demonstrate,  both by comparing the theoretical convergence rates and in simulation, that our algorithm outperforms existing gradient-based dynamics, in terms of the number of iterations needed to converge (§\ref{sec:theoreticalrates}-§\ref{sec:numerics}).
\end{itemize}
%
%

\smallskip
\emph{Basic notation}: 
See  \cite{Bianchi_LCSS2020}.
\newline\indent
\emph{Operator-theoretic notation}: 
For a function $\psi: \R^n \rightarrow \R \cup \{\infty\}$, $\dom(\psi) = \{x \in \R^n \mid \psi(x) < \infty\}$.  The mapping $\iota_{S}:\R^n \rightarrow \{ 0, \, \infty \}$ denotes the indicator function for the set $\mc{S} \subseteq \R^n$, i.e., $\iota_{S}(x) = 0$ if $x \in S$, $\infty$ otherwise. 
A set-valued mapping (or operator) $\mc{F}:\R^n\rightrightarrows \R^n$ is characterized by its graph
$\gra (\mc{F})=\{(x,u) \mid u\in \mc{F}(x)\}$. $\dom(\mc{F})=\{x\in\R^n| \mc{F}(x)\neq \varnothing \}$,
$\fix\left( \mathcal{F}\right) = \left\{ x \in \R^n \mid x \in \mathcal{F}(x) \right\}$ and $\zer\left( \mathcal{F}\right) = \left\{ x \in \R^n \mid 0 \in \mathcal{F}(x) \right\}$ denote the domain, set of fixed points and set of zeros, respectively. $\mc{F}^{-1} $ denotes the inverse operator of $\mc{F}$, defined through its graph as $\gra (\mc{F}^{-1})=\{(u,x)\mid (x,u)\in \gra(\mc{F})\}$. $\mathcal{F} : \R^n \rightrightarrows \R^n$ is
($\mu$-strongly) monotone  if $\langle u-v,x-y\rangle \geq 0 \, (\geq \mu \|x-y\|^2 )$, for all $(x,u)$,$(y,v)\in\gra(\F)$, where $\langle \cdot, \cdot \rangle$ denotes the Euclidean inner product. 
$\Id$ denotes the identity operator. 
For a function $\psi: \R^n \rightarrow \R \cup \{\infty\}$,
$\partial \psi: \dom(\psi) \rightrightarrows \R^n$ denotes its subdifferential operator, defined as $\partial \psi(x) = \{ v \in \R^n \mid \psi(z) \geq \psi(x) + \langle v, z-x \rangle   \textup{ for all } z \in {\rm dom}(\psi) \}$. $\nc_{S}: \R^n \rightrightarrows \R^n$ is the normal cone operator for the the set $S \subseteq \R^n$, i.e., 
$\nc_{S}(x) = \varnothing$ if $x \notin S$, $\left\{ v \in \R^n \mid \sup_{z \in S} \, \langle v, z-x\rangle  \leq 0  \right\}$ otherwise. If $S$ is closed and convex, it holds that $\partial \iota_S=\nc_S$.
 ${\rm J}_{\mathcal{F} }:=(\Id + \mathcal{F} )^{-1}$ denotes the resolvent operator of $\mathcal{F} $. 

\section{Mathematical background}\label{sec:mathbackground}
We consider a set of agents  $ \mc I:=\{ 1,\ldots,N \}$. Each agent $i\in \mc{I}$ shall choose its decision variable (i.e., strategy) $x_i$ from its local decision set $\textstyle \Omega_i \subseteq \R^{n_i}$. Let $x = \col( (x_i)_{i \in \mc I})  \in \Omega $ denote the vector of all the agents' decisions, $\textstyle \Omega = \Omega_1\times\dots\Omega_N\subseteq \R^n$ the overall action space and $\textstyle n:=\sum_{i=1}^N n_i$. 
The goal of each agent $i \in \mc I$ is to minimize its objective function $J_i(x_i,x_{-i})$, which depends on both the local variable $x_i$ and on the decision variables of the other agents $x_{-i}:= \col( (x_j)_{j\in \mc I\backslash \{ i \} } )$.
Then, the game is represented by the  inter-dependent optimization problems:
\begin{align} \label{eq:game}
\forall i \in \mc{I}:
\quad \underset{y_i \in \Omega_i}{\argmin}  \; J_i(y_i,x_{-i}).
	\end{align}
Our goal here is
to compute a \gls{NE}, as defined next.

\begin{definition}
	A Nash equilibrium is a set of strategies $x^{*}=\operatorname{col}\left((x_{i}^{*})_{i \in \mathcal{I}}\right)\in \Omega$ such that 
	\[
	\forall i \in \mathcal{I}: \quad x_{i}^{*} \in \underset{y_i\in \Omega_i}{\argmin} \, J_{i}\left(y_{i}, x_{-i}^{*}\right). \QEDopenhereeqn
	\]
\end{definition}

\medskip
The following assumptions are standard for \gls{NE} problems, see, e.g., \cite[Ass.~1]{TatarenkoShiNedic2018}, \cite[Ass.~2]{GadjovPavel2018}.

\begin{standing}[Regularity and convexity]\label{Ass:Convexity}
	For each $i\in \mathcal{I}$, the set $\Omega_i$ is non-empty, closed and convex; $J_{i}$ is continuous and the function $J_{i}\left(\cdot, x_{-i}\right)$ is continuously differentiable and convex for every $x_{-i}$.
	{\hfill $\square$} \end{standing}

Under Standing Assumption~\ref{Ass:Convexity}, a strategy $x^*$ is a \gls{NE} of the game in \eqref{eq:game} if and only if 
%
 it is a solution of the variational inequality VI$(F,\Omega)$\footnote{Given a set $S\subseteq \R^m$ and a mapping $\psi:S\rightarrow \R^m$, the variational inequality VI$(\psi,S)$ is the problem of finding a vector $\omega^*\in S$ such that $\psi(\omega^*)^\top(\omega-\omega^*)\geq 0$, for all $\omega \in S$.}  \cite[Prop.~1.4.2]{FacchineiPang2007},
where $F$ is the \emph{pseudo-gradient} mapping of the game:
\begin{align}
\label{eq:pseudo-gradient}
F(x):=\operatorname{col}\left( (\nabla _{x_i} J_i(x_i,x_{-i}))_{i\in\mathcal{I}}\right).
\end{align}
Equivalently, $x^*$ is a \gls{NE} if and only if the following holds:
\begin{align} \label{eq:NEinclusion}
{\0_{n}} & {\in F\left(x^{*}\right)+\mathrm{N}_{\Omega}\left(x^{*}\right)},
\end{align}
A sufficient condition for the existence of a unique \gls{NE} for the game in \eqref{eq:game} is the strong monotonicity of the pseudo-gradient \cite[Th. 2.3.3]{FacchineiPang2007}, as postulated next. This assumption is always used for \gls{NE} seeking under partial-decision information with fixed step sizes, e.g.,   \cite[Ass.~2]{Pavel2018}, \cite[Ass.~4]{DePersisGrammatico2018}, \cite[Ass.~2]{TatarenkoShiNedic2018}.

\begin{standing}\label{Ass:StrMon}
	The pseudo-gradient mapping $F$ in \eqref{eq:pseudo-gradient}  is $\mu$-strongly monotone and $\theta_0$-Lipschitz continuous, for some $\mu, \theta_0>0$: for any $x,y\in\R^n$, $\textstyle \langle x-y, F(x)-F(y)\rangle \geq \mu\|x-y\|^2$ and $\|F(x)-F(y)\|\leq \theta_0 \|x-y\|$. 
	\hfill $\square$
\end{standing}

\section{Distributed  Nash equilibrium seeking}\label{sec:distributedGNE}
 \setlength{\dbltextfloatsep}{12pt}
\begin{algorithm*}[t] \caption{Fully-distributed Nash equilibrium seeking via  preconditioned proximal-point iteration} \label{algo:1}
	\vspace{0.3em}
	{Initialization}: ~~~~~For all $i\in \mc{I}$, set $x_i^0\in \Omega_i$, $\bs{x}_{i,-i}^0\in \R^{n-n_i}$.
	\begin{itemize}[leftmargin=6.6em]
		\item[For all $k\geq 0$: ]
		
		\begin{itemize}[leftmargin=1em]
			\item[]  
			Communication: The  agents exchange the variables  $\{ x^k_{i},\bs{x}_{i,-i}^k\}$ with their neighbors.
			\\
			Local variables update: each agent  $i\in \mc{I}$ does:
			\begin{align*}
			\begin{aligned}
			\x_{i,-i}^{k+1}&= \textstyle \frac{1}{2}(\x_{i,-i}^k+\textstyle\sum_{j=1 }^{N}w_{i,j}\x_{j,-i}^k)
			\\
			x_i^{k+1} & =\begin{aligned}[t]
			\underset{y \in \Omega_i} {\argmin}\bigl(  &
			J_i(y,\x_{i,-i}^{k+1})+\textstyle \frac{1}{2\alpha}  \bigl\| y-x_i^{k} \bigr\|^2
			+\textstyle \frac{1}{2\alpha} 
			\bigl\| \textstyle y-\sum_{j=1}^{N}w_{i,j}\x_{j,i}^{k} \bigr\|^2  \bigr)
			\end{aligned}
			\end{aligned}
			\end{align*}
		\end{itemize}
	\end{itemize}
	\vspace{-0.5em}
\end{algorithm*}

In this section, we present an algorithm to seek a \gls{NE} of the game in \eqref{eq:game} in a fully-distributed way.
Specifically, 
each agent $i$ only knows its own cost function $J_i$ and feasible set $\Omega_i$. 
Moreover, agent $i$ does not have full knowledge of $x_{-i}$, and only relies on the information exchanged locally with some neighbors over a communication network $\mathcal G(\mc{I},\mc{E})$. The unordered pair $(i,j) $ belongs to the set of edges $\mc{E}$ if and only if agent $i$ and $j$ can mutually exchange information. 
We denote: $W\in \R^{N\times N}$ the  mixing matrix of $\mc{G}$, with $w_{i,j}:=[W]_{i,j}$ and  $w_{i,j}>0$ if $(i,j)\in \mc{E}$, $w_{i,j}=0$ otherwise;
$\mc{N}_i=\{j\mid (i,j)\in \mc{E}\}$ the set of neighbors of agent $i$. 

\begin{standing}
	\label{Ass:Graph}
	The communication graph $\mathcal G (\mc{I},\mc{E}) $ is undirected and connected. 
	\hfill $\square$
\end{standing}

For ease of notation, we  also assume that every node of the graph has a self-loop and that $W$ is doubly stochastic; this condition is not strictly necessary and can be dropped (see §\ref{sec:convergence}, Remark~\ref{rem:nondoubly}). 
 Nonetheless, we note that  such a mixing matrix can be distributedly generated on any undirected graph, e.g., by assigning Metropolis weights \cite[§2]{Bianchi_LCSS2020}.

\begin{standing}
	\label{Ass:Graph2}
The mixing matrix $W$ satisfies  the following conditions:
\begin{itemize}[topsep=0em]
		\item[(i)] \emph{Self loops: }$w_{i,i}>0$ for all $i\in \mc{I}$;
		\item[(ii)] \emph{Symmetry: }$W=W^\top$;
		\item[(iii)] \emph{Double stochasticity: }$W\1_N=\1_N, \1^\top W=\1^\top$.	{\hfill $\square$}
\end{itemize} 
\end{standing}


To cope with the lack of knowledge, the usual assumption for the partial-decision information scenario  is that each agent keeps an estimate of all other agents' action \cite{TatarenkoShiNedic2018}, \cite{SalehisadaghianiWeiPavel2019} \cite{DePersisGrammatico2018}.  We denote $\x_{i}=\operatorname{col}((\x_{i,j})_{j\in \mc{I}})\in \R^{n}$,  where $\x_{i,i}:=x_i$ and $\x_{i,j}$ is $i$'s estimate of agent $j$'s action, for all $j\neq i$; let also $\x_{j,-i}:=\col((\x_{j,\ell})_{\ell\in\mc{I}\backslash\{i\}})$. Our proposed fully-distributed \gls{NE} seeking dynamics are summarized in Algorithm \ref{algo:1}, where  $\alpha >0$ is a global constant parameter.

 In steady state, agents should agree on their estimates, i.e., $\x_i=\x_j$ for all $i,j \in \mc{I}$. In fact, the updates of the estimates $\x_{i,-i}^{k}$ resembles a consensus protocol, and can be interpreted as the attempt  of the agents to reach an agreement on the time-varying quantity $x$. In turn, the strategy $x_i$ of each agent is updated based on a proximal best-response, augmented with an extra disagreement penalization term. We remark that the agents evaluate their cost functions in their local estimates, not on the actual collective strategy.


\begin{remark}
The functions $J_i(\cdot,\x_{i,-i})$ are strongly convex, for all $\x_{i,-i}$, for all $i\in \mc{I}$, as a consequence of Standing Assumption~\ref{Ass:StrMon}. Therefore the $\argmin$ operator in Algorithm~\ref{algo:1} is single-valued and the algorithm is always well-defined.
\hfill $\square$
\end{remark}

\section{Convergence analysis}\label{sec:convergence}
In this section, we first derive Algorithm~\ref{algo:1} as a \gls{PPPA}. Then, we prove its convergence by leveraging a \emph{restricted} monotonicity property, under which classical results for the \gls{PPA} of monotone operators still hold.

Before proceeding, 
 we need some definitions. We denote $\x=\col((\x_i)_{i\in\mc{I}})$. Besides, let,   for all $i \in \mc{I}$ \cite[Eq.13-14]{Pavel2018},
\begin{subequations}
	\begin{align}
	\mathcal{R}_{i}:=&\left[ \begin{array}{lll}{{0}_{n_{i} \times n_{<i}}} & {I_{n_{i}}} & {\0_{n_{i} \times n_{>i}}}\end{array}\right], 
	\\
	\mathcal{S}_{i}:=&\left[ \begin{array}{ccc}{I_{n<i}} & {\0_{n<i \times n_{i}}} & {\0_{n<i \times n>i}} \\ {\0_{n>i \times n<i}} & {\0_{n>i \times n_{i}}} & {I_{n>i}}\end{array}\right],
	\end{align}
\end{subequations}
where $n_{<i}:=\sum_{j=1}^{i-1} n_{j}$, $n_{>i}:=\sum_{j=i+1}^{N} n_{j}$. In simple terms, $\mathcal R _i$ selects the $i$-th $n_i$ dimensional component from an $n$-dimensional vector, while $\mathcal S_i$ removes it. Thus, $\mathcal{R}_{i} \x_{i}=\x_{i,i}=x_i$ and $\mathcal{S}_{i} \x_{i}=\x_{i,-i}$. Let $\mathcal{R}:=\operatorname{diag}\left((\mathcal{R}_{i})_{i \in \mathcal{I}}\right)$,  $\mathcal{S}:=\operatorname{diag}\left((\mathcal{S}_{i})_{i \in \mathcal{I}}\right)$. Hence, $\mathcal{R} \x=x$ and $\mathcal{S}{\x}= \operatorname{col}((\x_{i,-i})_{i \in \mathcal{I}})\in \R^{(N-1) n}$. Moreover,
$
\x=\mathcal{R}^\top x+\mathcal{S}^\top \mc{S} \x.
$
We define the  \emph{extended pseudo-gradient} operator  $\bs{F}$ as
\begin{align}
\label{eq:extended_pseudo-gradient}
\bs{F}(\x):=\operatorname{col}\left((\nabla_{x_{i}} J_{i}\left(x_{i}, \x_{i,-i}\right))_{i \in \mathcal{I}}\right),
\end{align}
and the mappings
\begin{align}
\label{eq:Fa}
\bs{F}_{\textnormal{a}}(\x)&:=\alpha\mc{R}^\top\bs{F}(\x)+(I_{Nn}-\bs{W})\x,
\\
\label{eq:opA}
\mathcal{A}(\x)&:=\bs{F}_{\textnormal{a}}(\x)+\mathrm{N}_{\bs{\Omega}}(\x),
\end{align}
where $\alpha>0$ is a fixed parameter, $\bs{W}:=W\otimes I_n$ and 
\begin{align}
\label{eq:Omegabf}
\bs{\Omega}&:=\{\x\in\R^{nN}\mid \mc{R}\x\in \Omega\}.
\end{align}

The following lemma relates the \gls{NE} of the game in \eqref{eq:game} to the operators $\bs{F}_{\textnormal{a}}$ and $\mc{A}$. The proof is analogous to {\cite[Prop.~1]{TatarenkoShiNedic2018}}, and hence it is omitted.

\begin{lemma}\label{lem:VIequivalence}
	The following statements are equivalent:
	\begin{itemize}[topsep=0em] 
		\item [i)] $\bs{x^*}=\1_N\otimes x^*$, with  $x^*\in\Omega$  the \gls{NE} of the game in \eqref{eq:game};
		\item[ii)] $\x^*$ solves VI$(\bs{F}_\textnormal{a},\bs{\Omega})$;
		\item[iii)] $\0_{Nn}\in \A(\x^*)$.  \hfill $\square$
	\end{itemize}
\end{lemma}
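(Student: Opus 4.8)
The plan is to close the cycle (i)\,$\Rightarrow$\,(iii)\,$\Rightarrow$\,(i) together with the routine equivalence (ii)\,$\iff$\,(iii). The latter is immediate: since $\Omega$ is closed and convex and $\mc{R}$ is linear, $\bs{\Omega}$ in \eqref{eq:Omegabf} is closed and convex, so $\x^*$ solves VI$(\bs{F}_{\textnormal{a}},\bs{\Omega})$ precisely when $-\bs{F}_{\textnormal{a}}(\x^*)\in\mathrm{N}_{\bs{\Omega}}(\x^*)$, i.e. $\0_{Nn}\in\bs{F}_{\textnormal{a}}(\x^*)+\mathrm{N}_{\bs{\Omega}}(\x^*)=\A(\x^*)$. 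The substance therefore lies in (i)\,$\iff$\,(iii), and before attacking it I would record two structural facts. First, by Standing Assumptions~\ref{Ass:Graph}--\ref{Ass:Graph2}, the matrix $W$ has $1$ as a simple eigenvalue with eigenvector $\1_N$ (Perron--Frobenius, using symmetry, double stochasticity and connectedness); since $I_{Nn}-\bs{W}=(I_N-W)\otimes I_n$, this gives $\ker(I_{Nn}-\bs{W})=\range(\1_N\otimes I_n)$, so $(I_{Nn}-\bs{W})\x=\0$ holds iff $\x=\1_N\otimes\bar x$ for some $\bar x\in\R^n$ (consensus). Second, since $\bs{\Omega}$ constrains only the diagonal blocks $\x_{i,i}=\mc{R}_i\x_i$ to lie in $\Omega_i$ and leaves the estimate blocks free, its normal cone factorizes as $\mathrm{N}_{\bs{\Omega}}(\x)=\mc{R}^\top\mathrm{N}_{\Omega}(\mc{R}\x)$ (product rule for normal cones, using $\mc{R}\mc{R}^\top=I_n$).

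For (i)\,$\Rightarrow$\,(iii), I would take $\x^*=\1_N\otimes x^*$ with $x^*$ the \gls{NE}. Consensus gives $(I_{Nn}-\bs{W})\x^*=\0$, and because each agent's estimate then coincides with the true profile one has $\x^*_{i,-i}=x^*_{-i}$, whence $\bs{F}(\x^*)=F(x^*)$ from \eqref{eq:extended_pseudo-gradient} and \eqref{eq:pseudo-gradient}. Using the factorization, $\A(\x^*)=\mc{R}^\top\!\big(\alpha F(x^*)+\mathrm{N}_{\Omega}(x^*)\big)$. Since $x^*$ solves \eqref{eq:NEinclusion}, $-F(x^*)\in\mathrm{N}_{\Omega}(x^*)$, and as $\mathrm{N}_{\Omega}(x^*)$ is a cone and $\alpha>0$ also $-\alpha F(x^*)\in\mathrm{N}_{\Omega}(x^*)$; hence $\0_{Nn}\in\A(\x^*)$.

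The crux is (iii)\,$\Rightarrow$\,(i). From $\0_{Nn}\in\A(\x^*)$ and the factorization there is some $d\in\mathrm{N}_{\Omega}(\mc{R}\x^*)$ with
\[
\0_{Nn}=\alpha\mc{R}^\top\bs{F}(\x^*)+(I_{Nn}-\bs{W})\x^*+\mc{R}^\top d.
\]
The key manoeuvre is to premultiply by $\1_N^\top\otimes I_n$: this annihilates the consensus term, since $\1_N^\top W=\1_N^\top$ yields $(\1_N^\top\otimes I_n)(I_{Nn}-\bs{W})=0$, while the block-selection structure yields $(\1_N^\top\otimes I_n)\mc{R}^\top=I_n$; the identity then collapses to $\alpha\bs{F}(\x^*)+d=\0_n$. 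Substituting $\mc{R}^\top d=-\alpha\mc{R}^\top\bs{F}(\x^*)$ back into the displayed equation leaves exactly $(I_{Nn}-\bs{W})\x^*=\0_{Nn}$, which by the first structural fact forces consensus $\x^*=\1_N\otimes\bar x$. Then $\mc{R}\x^*=\bar x$, $\bs{F}(\x^*)=F(\bar x)$, and $d=-\alpha F(\bar x)\in\mathrm{N}_{\Omega}(\bar x)$, so the cone property gives $\0_n\in F(\bar x)+\mathrm{N}_{\Omega}(\bar x)$; by \eqref{eq:NEinclusion}, $\bar x$ is the \gls{NE}, and setting $x^*:=\bar x$ yields (i).

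I expect the main obstacle to be the (iii)\,$\Rightarrow$\,(i) direction, specifically the observation that the single left-multiplication by $\1_N^\top\otimes I_n$ does double duty --- killing the Laplacian-type term and inverting $\mc{R}^\top$ --- so that the back-substitution forces consensus \emph{without} invoking any monotonicity argument. The only other point needing care is the normal-cone factorization, where connectedness of the graph (ensuring the consensus subspace is exactly $\range(\1_N\otimes I_n)$) and the identity $\mc{R}\mc{R}^\top=I_n$ must both be used.
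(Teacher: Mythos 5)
Your proof is correct. Note that the paper does not actually spell out a proof of this lemma---it only cites \cite[Prop.~1]{TatarenkoShiNedic2018}---but your argument is precisely the standard one behind that reference: the normal-cone factorization $\mathrm{N}_{\bs{\Omega}}(\x)=\mc{R}^\top\mathrm{N}_{\Omega}(\mc{R}\x)$, the annihilation of the Laplacian-type term via $\1_N^\top W=\1_N^\top$ together with $(\1_N^\top\otimes I_n)\mc{R}^\top=I_n$, and simplicity of the Perron eigenvalue of $W$ to force consensus; all three identities check out, and no monotonicity is needed, exactly as you observe.
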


\smallskip
\subsection{Derivation of the algorithm}

Lemma \ref{lem:VIequivalence} is fundamental, because it  provides a systematic way of deriving fully-distributed \gls{NE} seeking algorithms, by applying standard solution methods for  VI$(\bs{F}_\textnormal{a},\bs{\Omega})$ (e.g., in \cite{TatarenkoShiNedic2018}, a projected gradient-method was developed) or operator splitting methods to compute a zero of the operator $\A(\x)$ (e.g.,  \cite{Pavel2018} follows a similar approach for games with coupling constraints). Nonetheless, technical difficulties arise because of the partial-decision information assumption. Specifically, the operator $\mc{R}^\top \bs{F}$ is not monotone for most cases of interest, not even if strong monotonicity of the pseudo-gradient mapping $F$ holds, i.e., Standing Assumption \ref{Ass:StrMon}. Only when the estimates $\bs{x}$ belong to the consensus subspace, i.e. $\bs{x}=\1_N\otimes x$, we have that $\bs{F}(\bs{x})=F(x)$.

In fact, Algorithm~\ref{algo:1} is an instance of (suitably preconditioned) \gls{PPA} \cite[Th.~23.41]{Bauschke2017} to seek a zero of $\A$. 
 We remark that many operator-theoretic properties are not guaranteed for the resolvent of a non-monotone operator $\mc{B}:\R^m\rightrightarrows \R^m$. For example,  $\J_{\mc{B}}=(\Id+\mc{B})^{-1}$
 may have a limited domain, or be set-valued. 
In this general case,
we write the \gls{PPA} as
\begin{equation}\label{eq:PPPiter}
\omega^{k+1}\in \J_\mc{B}(\omega^{k}),
\end{equation}
that is  well defined only if $\J_\mc{B}(\omega^k)\neq \varnothing$. Next, we show that Algorithm~\ref{algo:1} is obtained by applying the iteration in  \eqref{eq:PPPiter} to the operator
 $\Phi^{-1}\A$, where 
\begin{equation}\label{eq:Phi}
\Phi:= I_{Nn}+\bs{W},
\end{equation} 
is a symmetric nonsingular matrix, known as \emph{preconditioning} matrix. We note that $\Phi\succ 0$, by Standing Assumption~\ref{Ass:Graph2} and Gershgorin circle theorem,  and that  $\zer(\A)=\zer(\Phi^{-1}\A)$.

\begin{lemma}\label{lem:derivation}
Algorithm~\ref{algo:1} is equivalent to
	\begin{equation}
	\label{algo:1compact}
	\x^{k+1}\in \J_{\Phi^{-1}\A}(\x^k),
		\end{equation}
		 with $\A$ as in \eqref{eq:opA}, $\Phi$ as in \eqref{eq:Phi}.
	\hfill $\square$
\end{lemma}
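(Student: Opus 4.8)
The plan is to unfold the resolvent inclusion \eqref{algo:1compact} into its per-agent components and match them against the two updates in Algorithm~\ref{algo:1}. First I would rewrite \eqref{algo:1compact} as an inclusion without the inverse: since $\Phi$ is nonsingular, $\x^{k+1}\in\J_{\Phi^{-1}\A}(\x^k)$ is equivalent to $\x^k\in\x^{k+1}+\Phi^{-1}\A(\x^{k+1})$, and multiplying through by $\Phi$ gives $\Phi\x^k\in\Phi\x^{k+1}+\A(\x^{k+1})$. Substituting $\Phi=I_{Nn}+\bs W$, $\A=\bs F_{\textnormal a}+\mathrm N_{\bs\Omega}$ and $\bs F_{\textnormal a}(\x)=\alpha\mc R^\top\bs F(\x)+(I_{Nn}-\bs W)\x$, the two occurrences of $\bs W$ cancel, since $(I_{Nn}+\bs W)\x^{k+1}+(I_{Nn}-\bs W)\x^{k+1}=2\x^{k+1}$, leaving
\[
(I_{Nn}+\bs W)\x^k\in 2\x^{k+1}+\alpha\mc R^\top\bs F(\x^{k+1})+\mathrm N_{\bs\Omega}(\x^{k+1}).
\]

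Next I would split this block inclusion using the selection matrices. For each $i\in\mc I$ the $i$-block lives in $\R^n$, and I would apply $\mc S_i$ and $\mc R_i$ separately, using $\mc R_i\mc R_i^\top=I_{n_i}$, $\mc S_i\mc R_i^\top=\0$, $\mc R_i\x_i=x_i$, $\mc S_i\x_i=\x_{i,-i}$, together with $\mc R_i\x_j=\x_{j,i}$, $\mc S_i\x_j=\x_{j,-i}$, and the fact that the $i$-block of $\bs W\x$ equals $\sum_j w_{i,j}\x_j$. The crucial observation is that $\bs\Omega$ in \eqref{eq:Omegabf} constrains only $\mc R\x=x$, so $[\mathrm N_{\bs\Omega}(\x)]_i=\mc R_i^\top\mathrm N_{\Omega_i}(x_i)$; likewise the $i$-block of $\mc R^\top\bs F(\x)$ is $\mc R_i^\top\nabla_{x_i}J_i(x_i,\x_{i,-i})$, supported only on the $\x_{i,i}$ coordinate. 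Applying $\mc S_i$ therefore annihilates both the pseudo-gradient and the normal-cone terms, and what remains is the genuine equality $\x_{i,-i}^k+\sum_j w_{i,j}\x_{j,-i}^k=2\x_{i,-i}^{k+1}$, i.e.\ exactly the estimate (consensus) update of Algorithm~\ref{algo:1}.

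Applying $\mc R_i$ instead retains both terms and yields
\[
x_i^k+\textstyle\sum_j w_{i,j}\x_{j,i}^k\in 2x_i^{k+1}+\alpha\nabla_{x_i}J_i(x_i^{k+1},\x_{i,-i}^{k+1})+\mathrm N_{\Omega_i}(x_i^{k+1}).
\]
To close the argument I would compare this with the first-order optimality condition of the strongly convex, constrained proximal best-response defining $x_i^{k+1}$: writing that minimization as $\0\in\partial(\,\cdot\,)+\mathrm N_{\Omega_i}$ at the optimizer and multiplying by $\alpha$ gives, after using that $\mathrm N_{\Omega_i}$ is a cone so that $\alpha\mathrm N_{\Omega_i}=\mathrm N_{\Omega_i}$, precisely the displayed inclusion, where $\x_{i,-i}^{k+1}$ is the already-updated estimate consistent with the sequential form of the algorithm. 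Since the cost in the $\argmin$ is strongly convex (by the preceding Remark), its minimizer is unique, so the inclusion \eqref{algo:1compact} determines $\x^{k+1}$ uniquely and both directions of the equivalence hold.

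The main obstacle I anticipate is the bookkeeping of $\mathrm N_{\bs\Omega}$ and of $\mc R^\top\bs F$ under the $\mc R_i/\mc S_i$ projections: one must verify that these terms are supported exactly on the $x_i$-coordinate, so that the $\mc S_i$-projection becomes a bona fide equality (the consensus step) rather than a set-valued inclusion, while the $\mc R_i$-projection keeps them. Once this coordinate structure is made explicit, the cancellation of $\bs W$ and the translation of the proximal minimization into its optimality inclusion are routine.
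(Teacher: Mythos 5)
Your proposal is correct and follows essentially the same route as the paper's proof: unfold the resolvent into $\0\in\Phi(\x^{k+1}-\x^k)+\A(\x^{k+1})$, observe that the preconditioner cancels the $\bs W\x^{k+1}$ term, split the resulting inclusion with the selection operators $\mc R$ and $\mc S$ (using $\mc S\mathrm N_{\bs\Omega}=\0$ and $\mc S\mc R^\top=\0$), and identify the $\mc R$-component with the first-order optimality condition of the strongly convex proximal best-response. The only cosmetic difference is that you carry out the splitting agent-by-agent with $\mc R_i,\mc S_i$ rather than with the stacked $\mc R,\mc S$, and you make the support structure of $\mathrm N_{\bs\Omega}$ and $\mc R^\top\bs F$ more explicit, which the paper leaves implicit.
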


\begin{proof} 
	By definition of inverse operator we have that
	\allowdisplaybreaks
	\begin{align}
	\nonumber
	\hspace{-1.5 em}\x^{k+1} \! & \in (\Id+\Phi^{-1}\A)^{-1}\x^{k}
	\\
	\nonumber
	\hspace{-1.5 em}\iff 
	\0_{Nn}  & \in \x^{k+1}+\Phi^{-1} \A\x^{k+1}-\x^{k}
	\\
	\nonumber
	\hspace{-1.5 em}\iff 
	\0_{Nn}   & \in \Phi(\x^{k+1}-\x^{k})+\A\x^{k+1} 
	\\
\nonumber
	\hspace{-1.5 em}\iff 
	\0_{Nn} & \in \x^{k+1}+\cancel{\bs{W}\x^{k+1}}-\x^{k}-\bs{W}\x^{k} 	+\x^{k+1}
	\\ 
	\label{eq:stepderivation}
	&
    \quad -\cancel{\bs{W}\x^{k+1}}+\alpha\mc{R}^\top \bs{F}(\x^{k+1})+ \mathrm{N}_{\bs{\Omega}}(\x^{k+1}).  \hspace{-1 em}
	\end{align}
	In turn, the last inclusion can be split in two components by left-multiplying both sides with $\mc{R}$ and $\mc{S}$. Hence, by  $\mc{S}\mathrm{N}_{\bs{\Omega}}=\0_{(N-1)n}$ and  $\mc{S}\mc{R}^\top=\0_{(N-1)n\times n}$,  \eqref{eq:stepderivation} is equivalent to
	\begin{align}
	\nonumber
    &
	\left\{ \begin{aligned}
	 \0_{(N-1)n} & \in   \mc{S}(2\x^{k+1}-\x^{k})-\mc{S}\bs{W}\x^{k}
	\\  \0_{n} &\in
	\begin{multlined}[t]
	  2x^{k+1}-x^k-\mc{R}\bs{W}\x^k+\mathrm{N}_{{\Omega}}(x^{k+1})\\
	+ \alpha\bs{F}(x^{k+1},\mc{S}\x^{k+1})
	\end{multlined}
	\end{aligned} 
	\right.
	\\
	\nonumber
	\underset{\forall i\in \mc{I}}{\iff} &
	\left\{ \begin{aligned}
	\x_{i,-i}^{k+1} &= \textstyle \frac{1}{2}(\x_{i,-i}^k+\textstyle\sum_{j=1 }^{N}w_{i,j}\x_{j,-i}^{k})
	\\
	\nonumber
	\0_{n_i} &\in  \partial_{x_i^{k+1}}  \bigl(
	\begin{aligned}[t]
	& J_i(x_i^{k+1},\x_{i,-i}^{k+1}) +\textstyle \frac{1}{2\alpha}\bigl\| x_i^{k+1}-x_i^{k} \bigr\|^2 \\
	&+ \textstyle\frac{1}{2\alpha}\bigl\| x_i^{k+1}- \textstyle \sum_{j=1}^{N}w_{i,j}\x_{j,i}^k\bigr\|^2 
	\\ &
	+\iota_{\Omega_i}(x_i^{k+1})\bigr).
	\end{aligned}
	\end{aligned}
	\right.
	\end{align}
	The conclusion follows since the zeros of the  subdifferential of a (strongly) convex function coincide with the minima (unique minimum) \cite[Th.~16.3]{Bauschke2017}.
\end{proof}

\begin{remark}
The preconditioning matrix $\Phi$ is designed to decouple the system of inclusion in \eqref{eq:stepderivation}
from the graph structure,
i.e., to remove the term $\bs{W} \bs x^{k+1}$.
This ensures that the resulting updates can be computed by the agents  in a fully-distributed fashion.
 \hfill $\square$
\end{remark}

\begin{remark}\label{rem:rem1}
	By Lemma~\ref{lem:derivation}  and the explicit expression of  $\J_{\Phi^{-1}\A}$ in Algorithm~\ref{algo:1}, we conclude that  $\dom(\J_{\Phi^{-1}\A})=\R^{Nn}$ and that $\J_{\Phi^{-1}\A}$ is single-valued on its domain. \hfill $\square$
\end{remark}

\subsection{Convergence analysis}
Since the operator $\Phi^{-1}\A$ is not maximally monotone in general, the  convergence of Algorithm~\ref{algo:1} cannot be inferred by standard results for the \gls{PPA}.
In \cite[Th.~5]{TatarenkoShiNedic2018}, the authors proposed an accelerated gradient \gls{NE} seeking scheme, which achieves geometric convergence if the mapping $\bs{F }_{\textnormal{a}}$ is strongly monotone. However, this is a limiting assumption, that can be guaranteed only for some classes of games (cf. \cite[Rem.~3]{TatarenkoShiNedic2018}). Instead, our analysis is based on a weaker condition, namely the \emph{restricted }strong monotonicity of $\bs{F }_{\textnormal{a}}$ only with respect to the \gls{NE} \cite{Pavel2018}, \cite{SalehisadaghianiWeiPavel2019}. The main advantage is that, for suitable choices of the parameter $\alpha$, restricted strong monotonicity (unlike strong monotonicity) of $\bs{F}_{\textnormal{a}}$ holds for any game satisfying Standing Assumptions~\ref{Ass:Convexity}-\ref{Ass:Graph2}, as formalized in the next two  statements.

\begin{lemma}[\!\!{\cite[Lem.~1]{Bianchi_LCSS2020}}]\label{lem:LipschitzExtPseudo}
	The  mapping $\bs{F}$ in \eqref{eq:extended_pseudo-gradient} is $\theta$-Lipschitz continuous, for some $ \theta\in[\mu,\theta_0]$: for any  $\x,\y\in \R^{Nn}$, $\| \bs{F} (\x)-\bs{F}(\y)\|\leq \theta \|\x-\y\|$.
	{\hfill $\square$} \end{lemma}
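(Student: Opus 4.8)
The plan is to exploit the fact that $\bs{F}$ decouples into $N$ independent blocks, one per agent, and to compare each block with the corresponding block of the standard pseudo-gradient $F$. Writing $F=\col((F_i)_{i\in\mc{I}})$ with $F_i(z)=\nabla_{x_i}J_i(\mc{R}_iz,\mc{S}_iz)$ for $z\in\R^n$, definition~\eqref{eq:extended_pseudo-gradient} yields the key identity $\bs{F}(\x)=\col((F_i(\x_i))_{i\in\mc{I}})$: the $i$-th block of $\bs{F}(\x)$ is $F_i$ evaluated at agent $i$'s full estimate vector $\x_i\in\R^n$, and it depends on $\x$ only through $\x_i$.

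For the upper bound $\theta\le\theta_0$, I would first observe that each block $F_i:\R^n\to\R^{n_i}$ is itself $\theta_0$-Lipschitz, since $\|F_i(u)-F_i(v)\|\le\|F(u)-F(v)\|\le\theta_0\|u-v\|$ by Standing Assumption~\ref{Ass:StrMon}. Summing the squared block bounds over the decoupled structure then gives
\[
\|\bs{F}(\x)-\bs{F}(\y)\|^2=\sum_{i\in\mc{I}}\|F_i(\x_i)-F_i(\y_i)\|^2\le\theta_0^2\sum_{i\in\mc{I}}\|\x_i-\y_i\|^2=\theta_0^2\|\x-\y\|^2,
\]
so $\bs{F}$ is $\theta_0$-Lipschitz and its smallest Lipschitz constant obeys $\theta\le\theta_0$.

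For the lower bound $\theta\ge\mu$, I would specialize the strong-monotonicity inequality of Standing Assumption~\ref{Ass:StrMon} to pairs $x,y$ differing only in their $i$-th block; this shows each $J_i(\cdot,x_{-i})$ is $\mu$-strongly convex, hence $\|\nabla_{x_i}J_i(x_i,\x_{i,-i})-\nabla_{x_i}J_i(y_i,\x_{i,-i})\|\ge\mu\|x_i-y_i\|$. Choosing $\x,\y$ that coincide everywhere except in the single sub-block $\x_{i,i}=x_i$ makes every block of $\bs{F}(\x)-\bs{F}(\y)$ vanish except the $i$-th, giving $\|\bs{F}(\x)-\bs{F}(\y)\|\ge\mu\|\x-\y\|$; hence no admissible Lipschitz constant can fall below $\mu$.

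I do not expect a genuine analytic obstacle: the entire content lies in recognizing the block-diagonal dependence $\bs{F}(\x)=\col((F_i(\x_i))_i)$ on the estimate vectors. The only care needed is bookkeeping the notation — distinguishing $\x_i\in\R^n$ from $\x\in\R^{Nn}$ and from the sub-block $x_i=\mc{R}_i\x_i$, and keeping the selection matrices $\mc{R}_i,\mc{S}_i$ straight — so that the block identity and the per-block inequalities align. Once this is fixed, both bounds follow immediately from Standing Assumption~\ref{Ass:StrMon}.
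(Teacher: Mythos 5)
The paper does not prove this lemma: it is imported verbatim from \cite[Lem.~1]{Bianchi_LCSS2020}, so there is no in-paper proof to compare against. Your argument is correct and is the standard one for this fact: the block identity $\bs{F}(\x)=\col((F_i(\x_i))_{i\in\mc{I}})$ reduces the upper bound to the $\theta_0$-Lipschitz continuity of each block of $F$ summed over the decoupled structure, and your lower-bound construction (perturbing only the sub-block $\x_{i,i}$, using the $\mu$-strong convexity of $J_i(\cdot,\x_{i,-i})$ that follows from Standing Assumption~\ref{Ass:StrMon} restricted to the $i$-th coordinate block) correctly shows no Lipschitz constant below $\mu$ is admissible; note this single-block perturbation is genuinely needed, since restricting to consensus points $\x=\1_N\otimes x$ would only yield $\theta\geq\mu/\sqrt{N}$. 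Both steps are sound and complete.
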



\begin{lemma}[\!\!{\cite[Lem.~$3$]{Pavel2018}}]\label{lem:strongmon_constant}
	Let 
	\begin{align}
	\label{eq:M1}
    {M}:=\alpha\begin{bmatrix}{\frac{\mu}{N}} & \ {-\frac{\theta_0+\theta}{2\sqrt{N}}} \\ {-\frac{\theta_0+\theta}{2\sqrt{N}}} & \ { \textstyle \frac{\uplambda_{2}(I-W)}{\alpha}-\theta}\end{bmatrix}, 
	\quad \begin{aligned}
	\alphamax&:=\textstyle \frac{4\mu\uplambda_2({I-W})}{(\theta_0+\theta)^{2}+4\mu\theta},
	\\
	{\rho}_\alpha&:=\uplambda_{\textnormal{min}} ({M}).
	\end{aligned}
	\end{align}
	For any $\alpha \in (0,\alphamax)$, $\bs{F}_{\textnormal{a}}$ is $\rho_\alpha$-restricted strongly monotone with respect to the consensus subspace $\bs{E}_{n} :=\{\bs{y} \in \R^{N n}:\bs{y}=\1_{N} \otimes y, y\in \R^n\}$: for any $\x\in\R^{Nn}$ and any $\y\in \bs{E}_{n}$,  it holds that $M\succ 0$  and also that 
	\[
	\begin{multlined}
	\langle\x-\y, 
	\bs{F}_{\textnormal{a}} (\x)-\bs{F}_{\textnormal{a}}  (\y)\rangle \geq {\rho_\alpha}\left\|\x-\y \right\|^{2}.
 	\end{multlined}
	\QEDopenhereeqn
	\]
\end{lemma}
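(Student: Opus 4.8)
The plan is to reduce the claimed inequality to the positive definiteness of the $2\times 2$ matrix $M$ in \eqref{eq:M1}, by lower-bounding the inner product in terms of just two scalars: the magnitude of the \emph{consensus-direction} error and the magnitude of the \emph{disagreement} among the estimates. Concretely, for $\x\in\R^{Nn}$ and $\y=\1_N\otimes y\in\bs{E}_n$, I set $\xbar:=\tfrac1N\sum_{i\in\mc{I}}\x_i$ and split $\x-\y$ orthogonally as $\bs z_\parallel+\bs z_\perp$, where $\bs z_\parallel=\1_N\otimes(\xbar-y)\in\bs{E}_n$ and $\bs z_\perp=\x-\1_N\otimes\xbar\perp\bs{E}_n$. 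Writing $a:=\|\bs z_\parallel\|=\sqrt N\,\|\xbar-y\|$ and $b:=\|\bs z_\perp\|$, one has $a^2+b^2=\|\x-\y\|^2$, so a bound of the form $(a,\,b)\,M\,(a,\,b)^\top$ will translate into the desired $\rho_\alpha\|\x-\y\|^2$ once $M\succ0$.

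Next I split the operator. Since $\bs{F}_{\textnormal{a}}(\x)=\alpha\mc{R}^\top\bs{F}(\x)+(I_{Nn}-\bs W)\x$ and $\langle\x-\y,\mc{R}^\top v\rangle=\langle\mc{R}(\x-\y),v\rangle=\langle x-y,v\rangle$ (using $\mc{R}\x=x$ and $\mc{R}(\1_N\otimes y)=y$), together with $\bs{F}(\y)=F(y)$ on the consensus subspace, I get $\langle\x-\y,\bs{F}_{\textnormal{a}}(\x)-\bs{F}_{\textnormal{a}}(\y)\rangle=\alpha\langle x-y,\bs{F}(\x)-F(y)\rangle+\langle\x-\y,(I_{Nn}-\bs W)(\x-\y)\rangle$. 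For the second summand, $\bs W=W\otimes I_n$ is symmetric doubly stochastic, so $I_{Nn}-\bs W\succeq0$ with kernel exactly $\bs{E}_n$ and smallest positive eigenvalue $\uplambda_2(I-W)$ (positive by Standing Assumption~\ref{Ass:Graph} and~\ref{Ass:Graph2}); hence this term equals $\langle\bs z_\perp,(I_{Nn}-\bs W)\bs z_\perp\rangle\ge\uplambda_2(I-W)\,b^2$.

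The crux is the pseudo-gradient term $\langle x-y,\bs{F}(\x)-F(y)\rangle$, which I estimate by inserting the averaged point: I write $x-y=(x-\xbar)+(\xbar-y)$ and $\bs{F}(\x)-F(y)=[\bs{F}(\x)-F(\xbar)]+[F(\xbar)-F(y)]$, using $\bs{F}(\1_N\otimes\xbar)=F(\xbar)$. The aligned piece $\langle\xbar-y,F(\xbar)-F(y)\rangle$ is bounded below by $\mu\|\xbar-y\|^2=\tfrac{\mu}{N}a^2$ via $\mu$-strong monotonicity of $F$. The remaining pieces are controlled by Cauchy--Schwarz together with $\|x-\xbar\|=\|\mc{R}\bs z_\perp\|\le b$ (as $\mc{R}$ is a selection matrix), the $\theta$-Lipschitz bound $\|\bs{F}(\x)-F(\xbar)\|=\|\bs{F}(\x)-\bs{F}(\1_N\otimes\xbar)\|\le\theta b$ from Lemma~\ref{lem:LipschitzExtPseudo}, and the $\theta_0$-Lipschitz bound $\|F(\xbar)-F(y)\|\le\theta_0\|\xbar-y\|=\tfrac{\theta_0}{\sqrt N}a$. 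Collecting these yields $\langle x-y,\bs{F}(\x)-F(y)\rangle\ge\tfrac{\mu}{N}a^2-\tfrac{\theta_0+\theta}{\sqrt N}ab-\theta b^2$.

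Assembling, $\langle\x-\y,\bs{F}_{\textnormal{a}}(\x)-\bs{F}_{\textnormal{a}}(\y)\rangle\ge(a,\,b)\,M\,(a,\,b)^\top$ with $M$ exactly as in \eqref{eq:M1}. By Sylvester's criterion $M\succ0$ iff its $(1,1)$ entry $\alpha\mu/N>0$ and $\det M>0$, and a direct computation gives $\det M>0\iff\alpha<\alphamax$; thus for $\alpha\in(0,\alphamax)$ the form is bounded below by $\uplambda_{\textnormal{min}}(M)\,(a^2+b^2)=\rho_\alpha\|\x-\y\|^2$, which is the claim. I expect the main difficulty to be the bookkeeping in the third paragraph: the strong-monotonicity gain must be extracted at the \emph{averaged} point $\xbar$ (which is what produces the $\mu/N$ coefficient through $\|\bs z_\parallel\|^2=N\|\xbar-y\|^2$), and the two distinct Lipschitz constants $\theta$ (for the extended map $\bs{F}$) and $\theta_0$ (for $F$) must be attached to the correct cross terms so that the off-diagonal coefficient is precisely $(\theta_0+\theta)/(2\sqrt N)$; any miscounting there alters $M$ and hence the threshold $\alphamax$.
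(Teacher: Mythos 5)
Your argument is correct: the orthogonal split of $\x-\y$ into the consensus component $\1_N\otimes(\xbar-y)$ and the disagreement component, the insertion of the averaged point $\1_N\otimes\xbar$ to extract $\mu$-strong monotonicity of $F$ with the $\mu/N$ scaling, the separate $\theta$ and $\theta_0$ Lipschitz bounds on the cross terms, and the reduction to $(a,b)M(a,b)^\top$ with $\det M>0\iff\alpha<\alphamax$ all check out. The paper itself gives no proof, deferring entirely to \cite[Lem.~3]{Pavel2018}, and your derivation is essentially that reference's argument, so there is nothing to flag.
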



\smallskip
Moreover, the operator $\Phi^{-1}\A$ retains this property, in the Hilbert space induced by the inner product $\langle \cdot , \cdot \rangle_{\Phi}$. Here, we denote, for a matrix $\Psi\succ 0$,  by $\langle\cdot ,\cdot\rangle_\Psi : (x,y)\mapsto  \langle\Psi x, y\rangle$ and  $\| \cdot \|_\Psi : x\mapsto  \langle\Psi x , x\rangle$ the $\Psi$-weighted Euclidean inner product and norm, respectively.

\begin{lemma}\label{lem:strmoninPhi}
	Let $\alphamax$, ${\rho_\alpha}$ be as in \eqref{eq:M1},   $\alpha\in(0,\alphamax)$. Then, $\Phi^{-1} \A$ is $\textstyle \frac{{\rho_\alpha}}{\|\Phi \|}$-restricted strongly monotone with respect to $\zer(\A)$ in the $\Phi$-induced space: for all $(\x,\bs{u}),(\y,\bs{v})\in \gra({\Phi^{-1}\mc{A}})$ such that $\bs{y} \in \zer(\mc{A})$, it holds that 
	\[\langle \bs{u}-\bs{v} , \x-\y \rangle_{\Phi} \geq \textstyle \frac{{\rho_\alpha}}{\|\Phi \|} \|\x-\y\|^2_{\Phi}. \QEDopenhereeqn \]
\end{lemma}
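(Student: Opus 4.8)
The plan is to unwind the definition of the graph of $\Phi^{-1}\A$ so that the $\Phi$-weighted pairing on the left collapses into an ordinary Euclidean pairing of $\x-\y$ against genuine elements of the \emph{un}-preconditioned operator $\A$, and then to invoke the restricted strong monotonicity of $\bs{F}_{\textnormal{a}}$ from Lemma~\ref{lem:strongmon_constant} together with monotonicity of the normal-cone part. First I would observe that $(\x,\bs{u})\in\gra(\Phi^{-1}\A)$ is equivalent to $\Phi\bs{u}\in\A(\x)$, and likewise $\Phi\bs{v}\in\A(\y)$. Hence the left-hand side rewrites as
\[
\langle \bs{u}-\bs{v},\x-\y\rangle_{\Phi}=\langle \Phi\bs{u}-\Phi\bs{v},\x-\y\rangle,
\qquad \Phi\bs{u}\in\A(\x),\ \Phi\bs{v}\in\A(\y).
\]
This is the crucial simplification: the weighting matrix $\Phi$ in the inner product is absorbed exactly by the preconditioning inside the resolvent, leaving us to estimate an ordinary pairing against a pair of elements of $\A$.

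Next, using $\A=\bs{F}_{\textnormal{a}}+\mathrm{N}_{\bs{\Omega}}$, I would write $\Phi\bs{u}=\bs{F}_{\textnormal{a}}(\x)+\bs{n}_{\x}$ and $\Phi\bs{v}=\bs{F}_{\textnormal{a}}(\y)+\bs{n}_{\y}$ with $\bs{n}_{\x}\in\mathrm{N}_{\bs{\Omega}}(\x)$ and $\bs{n}_{\y}\in\mathrm{N}_{\bs{\Omega}}(\y)$, and split the pairing accordingly. The key observation is that the hypothesis $\y\in\zer(\A)$ places $\y$ in the consensus subspace: by Lemma~\ref{lem:VIequivalence}, $\y=\1_N\otimes x^{*}\in\bs{E}_n$. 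This is precisely what makes the \emph{restricted} (rather than full) strong monotonicity of $\bs{F}_{\textnormal{a}}$ applicable, so Lemma~\ref{lem:strongmon_constant} yields $\langle \bs{F}_{\textnormal{a}}(\x)-\bs{F}_{\textnormal{a}}(\y),\x-\y\rangle\geq {\rho_\alpha}\|\x-\y\|^2$. Since $\bs{\Omega}$ is closed and convex, $\mathrm{N}_{\bs{\Omega}}$ is monotone, giving $\langle \bs{n}_{\x}-\bs{n}_{\y},\x-\y\rangle\geq 0$. Adding the two bounds produces $\langle \Phi\bs{u}-\Phi\bs{v},\x-\y\rangle\geq{\rho_\alpha}\|\x-\y\|^2$.

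Finally, I would convert the unweighted norm into the $\Phi$-weighted one. As $\Phi\succ 0$, we have $\|\x-\y\|_{\Phi}^2=\langle \Phi(\x-\y),\x-\y\rangle\leq \|\Phi\|\,\|\x-\y\|^2$, hence $\|\x-\y\|^2\geq \|\Phi\|^{-1}\|\x-\y\|_{\Phi}^2$. Chaining this with the previous inequality and the rewriting from the first step gives exactly $\langle \bs{u}-\bs{v},\x-\y\rangle_{\Phi}\geq \frac{{\rho_\alpha}}{\|\Phi\|}\|\x-\y\|^2_{\Phi}$, as claimed. I do not anticipate a substantive obstacle; the only step requiring care is the verification that $\y$ indeed lies in $\bs{E}_n$, so that the restricted strong monotonicity of $\bs{F}_{\textnormal{a}}$ can be invoked — this is exactly where the assumption $\y\in\zer(\A)$ and the equivalence established in Lemma~\ref{lem:VIequivalence} enter, and it is what prevents the argument from silently requiring full strong monotonicity of $\bs{F}_{\textnormal{a}}$.
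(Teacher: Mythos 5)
Your argument is correct and coincides with the paper's own proof: both pass from $\gra(\Phi^{-1}\A)$ to $\gra(\A)$ to absorb the $\Phi$-weighting, invoke Lemma~\ref{lem:strongmon_constant} plus monotonicity of the normal cone (with $\y\in\bs{E}_n$ supplied by Lemma~\ref{lem:VIequivalence}), and finish with $\|\x-\y\|^2\geq\|\Phi\|^{-1}\|\x-\y\|_{\Phi}^2$. Your write-up merely makes the splitting of $\A$ into $\bs{F}_{\textnormal{a}}+\mathrm{N}_{\bs{\Omega}}$ more explicit than the paper does.
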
 
\smallskip

Towards our main result, we next prove the convergence of the iteration in \eqref{eq:PPPiter} to an equilibrium, under restricted strong monotonicity of  $\mc{B}$. The proof is based on the restricted contractivity of $\J_\mc{B}$ with respect to its (unique) fixed point.

\begin{theorem}\label{th:PPP}
	Let $\mc{B}:\R^m\rightrightarrows \R^m$ be $\tilde{\mu}$-restricted strongly monotone with respect to $\zer(\mc{B})$, for some $\tilde{\mu}>0$, in the space induced by some inner product $\langle \cdot,\cdot \rangle_{\Psi}$, i.e.,
	\begin{equation}\label{eq:thPPPrestricted}
	\langle x-z , u-v \rangle_{\Psi} \geq \tilde{\mu} \|x-z\|^2_{\Psi},
	\end{equation}
	for any $(x,u),(z,v)\in \gra(B)$ such that $z\in \zer(\mc{B})$. Assume that $\zer(\mc{B})\neq \varnothing$ and that $\dom(\J_\mc{B})=\R^m$. Then, for any $\omega^0\in\R^m$,  the proximal-point iteration in \eqref{eq:PPPiter}
	converges with linear rate to the unique point $\{z\}= \zer(\mc{B})$: for all $k\in\N$,
\[
	\textstyle	\|\omega^k-z\|_{\Psi}\leq\left(\frac{1}{ 1+ \tilde{\mu}}\right)^{k}\|\omega^0-z \|_{\Psi}. \QEDopenhereeqn 
\]
\end{theorem}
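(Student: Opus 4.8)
The plan is to establish that the resolvent $\J_\mc{B}$ is a strict contraction, in the $\Psi$-norm, toward its unique fixed point; the stated geometric decay then follows by a one-line induction.

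First I would record two preliminaries. \textbf{Uniqueness of the zero:} if $z_1,z_2\in\zer(\mc{B})$ then $(z_1,\0),(z_2,\0)\in\gra(\mc{B})$ with both base points in $\zer(\mc{B})$, so applying \eqref{eq:thPPPrestricted} yields $0=\langle z_1-z_2,\0-\0\rangle_\Psi\geq\tilde\mu\|z_1-z_2\|^2_\Psi$; as $\tilde\mu>0$, this forces $z_1=z_2$, so $\zer(\mc{B})=\{z\}$ is a singleton. \textbf{$z$ is a fixed point of the resolvent:} $\0\in\mc{B}(z)$ is equivalent to $z\in(\Id+\mc{B})(z)$, i.e. $z\in\J_\mc{B}(z)$. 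Finally, the hypothesis $\dom(\J_\mc{B})=\R^m$ guarantees $\J_\mc{B}(\omega^k)\neq\varnothing$ at every step, so the iteration \eqref{eq:PPPiter} is well defined for any $\omega^0$.

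The core is the one-step contraction. By definition of the resolvent, $\omega^{k+1}\in\J_\mc{B}(\omega^k)$ is equivalent to $\omega^k-\omega^{k+1}\in\mc{B}(\omega^{k+1})$, that is $(\omega^{k+1},\,\omega^k-\omega^{k+1})\in\gra(\mc{B})$. I would then invoke \eqref{eq:thPPPrestricted} with the free pair $(x,u)=(\omega^{k+1},\,\omega^k-\omega^{k+1})$ and the base point $(z,v)=(z,\0)$, $z\in\zer(\mc{B})$, to get $\langle\omega^{k+1}-z,\,\omega^k-\omega^{k+1}\rangle_\Psi\geq\tilde\mu\|\omega^{k+1}-z\|^2_\Psi$. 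Writing $\omega^k-\omega^{k+1}=(\omega^k-z)-(\omega^{k+1}-z)$, the left-hand side equals $\langle\omega^{k+1}-z,\,\omega^k-z\rangle_\Psi-\|\omega^{k+1}-z\|^2_\Psi$, so rearranging gives $(1+\tilde\mu)\|\omega^{k+1}-z\|^2_\Psi\leq\langle\omega^{k+1}-z,\,\omega^k-z\rangle_\Psi\leq\|\omega^{k+1}-z\|_\Psi\,\|\omega^k-z\|_\Psi$, where the final bound is Cauchy--Schwarz in the $\Psi$-inner product. Dividing by $\|\omega^{k+1}-z\|_\Psi$ (the inequality being trivial when this vanishes) yields $\|\omega^{k+1}-z\|_\Psi\leq\frac{1}{1+\tilde\mu}\|\omega^k-z\|_\Psi$, and induction on $k$ delivers the claimed rate.

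I do not anticipate a genuine obstacle: this is the classical linear-rate argument for the proximal-point method, the only twist being that monotonicity is assumed merely against $\zer(\mc{B})$ rather than globally. The one point that warrants care is that \emph{every} selection $\omega^{k+1}$ (should $\J_\mc{B}$ be set-valued away from $\zer(\mc{B})$) obeys the same contraction; this is automatic, since the only ingredients consumed are the graph membership $(\omega^{k+1},\omega^k-\omega^{k+1})\in\gra(\mc{B})$ and the anchoring at $z\in\zer(\mc{B})$, so no additional single-valuedness or firm nonexpansiveness of the resolvent is required.
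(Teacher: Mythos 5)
Your proposal is correct and follows essentially the same argument as the paper: use $(\omega^{k+1},\omega^k-\omega^{k+1})\in\gra(\mc{B})$ together with \eqref{eq:thPPPrestricted} anchored at $(z,\0)$, then Cauchy--Schwarz and division by $\|\omega^{k+1}-z\|_\Psi$ to obtain the $\frac{1}{1+\tilde\mu}$ contraction. The only cosmetic difference is that you establish uniqueness of the zero up front (directly from two zeros in the graph), whereas the paper deduces it at the end from the fact that the iterates converge to every $z\in\zer(\mc{B})$.
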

\smallskip

We are now ready to show the main result of the paper, namely the linear convergence of Algorithm~\ref{algo:1}  to a \gls{NE} .

\begin{theorem}\label{th:main}
	Let $\alphamax$, ${\rho_\alpha}$  be as in \eqref{eq:M1},  $\Phi$ as in \eqref{eq:Phi}, $\alpha\in (0,\alphamax)$. For any initial condition, the sequence $(\x^k)_{k\in\N}$ generated by Algorithm~\ref{algo:1} converges with linear rate to  $\x^*=\1_n\otimes x^*$, where $x^*$ is  the unique \gls{NE} of the game in \eqref{eq:game}: for all $k\in \N$,   
	\[ 
	\textstyle \|\x^k-\x^*\|\leq \sqrt{ \frac{\| \Phi \|}{\uplambda_{\textnormal{min}}(\Phi)}}\left(\frac{1}{1+{\rho_\alpha}/\|\Phi\|}\right)^k\|\x^0-\x^*\|. \QEDopenhereeqn
	\]
\end{theorem}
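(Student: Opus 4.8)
The plan is to recognize Theorem~\ref{th:main} as a direct instantiation of the abstract convergence result in Theorem~\ref{th:PPP}, followed by a conversion from the $\Phi$-weighted norm to the standard Euclidean norm. Concretely, I would apply Theorem~\ref{th:PPP} with the choices $\mc{B}=\Phi^{-1}\A$, $\Psi=\Phi$, and $\tilde{\mu}=\rho_\alpha/\|\Phi\|$, identifying the iterates $\omega^k$ of \eqref{eq:PPPiter} with the sequence $\x^k$ produced by Algorithm~\ref{algo:1}.

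First I would verify that all hypotheses of Theorem~\ref{th:PPP} hold for this choice. The restricted strong monotonicity of $\Phi^{-1}\A$ with constant $\rho_\alpha/\|\Phi\|$ in the $\Phi$-induced inner product is exactly Lemma~\ref{lem:strmoninPhi}, valid for $\alpha\in(0,\alphamax)$. Non-emptiness of $\zer(\Phi^{-1}\A)=\zer(\A)$ follows from Lemma~\ref{lem:VIequivalence}: under Standing Assumption~\ref{Ass:StrMon} the game admits a unique \gls{NE} $x^*$, and $\x^*=\1_N\otimes x^*$ is the corresponding zero of $\A$; restricted strong monotonicity moreover forces this zero to be unique. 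Finally, $\dom(\J_{\Phi^{-1}\A})=\R^{Nn}$ together with single-valuedness of the resolvent is provided by Remark~\ref{rem:rem1}, while Lemma~\ref{lem:derivation} guarantees that the recursion $\x^{k+1}=\J_{\Phi^{-1}\A}(\x^k)$ coincides with Algorithm~\ref{algo:1}. Thus the iteration is well defined for every initial condition and Theorem~\ref{th:PPP} applies.

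Applying Theorem~\ref{th:PPP} then yields linear convergence in the $\Phi$-norm,
\[
\|\x^k-\x^*\|_\Phi\leq\Big(\tfrac{1}{1+\rho_\alpha/\|\Phi\|}\Big)^k\|\x^0-\x^*\|_\Phi,
\]
with $\x^*=\1_N\otimes x^*$. To obtain the Euclidean estimate claimed in the statement, I would invoke the norm equivalence for $\Phi$, which is symmetric positive definite by Standing Assumption~\ref{Ass:Graph2} and Gershgorin's theorem: for all $v$ one has $\uplambda_{\textnormal{min}}(\Phi)\|v\|^2\leq\|v\|_\Phi^2\leq\|\Phi\|\,\|v\|^2$, where $\|\Phi\|=\uplambda_{\textnormal{max}}(\Phi)$. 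Bounding the left endpoint below through $\uplambda_{\textnormal{min}}(\Phi)$ and the right endpoint above through $\|\Phi\|$, chaining with the $\Phi$-norm contraction and taking square roots produces exactly the amplification factor $\sqrt{\|\Phi\|/\uplambda_{\textnormal{min}}(\Phi)}$ multiplying the geometric rate.

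There is no genuine obstacle in this final theorem: the difficult content has already been isolated in the preparatory results, with the restricted-monotonicity transfer into the $\Phi$-geometry (Lemma~\ref{lem:strmoninPhi}) and the abstract linear rate for resolvents of restricted strongly monotone operators (Theorem~\ref{th:PPP}) doing all the work. The only points requiring care are bookkeeping ones: matching the weighted inner product $\Psi=\Phi$ so that the contraction is stated in the correct geometry, identifying the limit point as the consensual lift $\1_N\otimes x^*$ via Lemma~\ref{lem:VIequivalence}, and applying the Euclidean-to-$\Phi$ norm equivalence in the correct direction at each endpoint so as not to lose the advertised constant.
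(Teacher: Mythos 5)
Your proposal is correct and follows essentially the same route as the paper's own proof: both instantiate Theorem~\ref{th:PPP} with $\mc{B}=\Phi^{-1}\A$, $\Psi=\Phi$, $\tilde{\mu}=\rho_\alpha/\|\Phi\|$, verify the hypotheses via Lemmas~\ref{lem:derivation}, \ref{lem:VIequivalence}, \ref{lem:strmoninPhi} and Remark~\ref{rem:rem1}, and then pass from the $\Phi$-weighted norm to the Euclidean norm using $\uplambda_{\textnormal{min}}(\Phi)\|v\|^2\leq\|v\|_\Phi^2\leq\|\Phi\|\|v\|^2$. No gaps; your write-up is, if anything, more explicit than the paper's about which hypothesis of Theorem~\ref{th:PPP} each preparatory result discharges.
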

\smallskip

\begin{remark}\label{rem:nondoubly}
	If the mixing matrix $W$ is not doubly stochastic (i.e., if the last condition in Standing Assumption~\ref{Ass:Graph2} does not hold), an iteration analogous to Algorithm~\ref{algo:1} can be derived by defining  $\mc{A}(\x)=\alpha\mc{R}^\top\bs{F}(\x)+(\bs{D}-\bs{W})\x+\mathrm{N}_{\bs{\Omega}}(\x)$ and $\Phi=\bs{D}+\bs{W}$, where $\bs{D}= D\otimes I_n$ and $D$ is the degree matrix of the communication graph, for which  a convergence result analogous to Theorem~\ref{th:main} still holds.  \hfill $\square$
\end{remark}

\section{Discussion on the convergence rate}\label{sec:theoreticalrates}
In this section, we compare the convergence rate of Algorithm~\ref{algo:1} with that of two gradient-based \gls{NE} seeking schemes recently presented in \cite{TatarenkoShiNedic2018}: GRANE \cite[Alg.~1]{TatarenkoShiNedic2018} and  acc-GRANE \cite[Alg.~2]{TatarenkoShiNedic2018}.

GRANE converges linearly  under $\mu_{\bs{F}_{\textnormal{a}}}$-restricted strongly monotonicity and $\theta_{\bs{F}_{\textnormal{a}}}$-Lispchitz continuity of $\bs{F}_\textnormal{a}$ in \eqref{eq:Fa}, with rate (in squared norm) $\mc{O}(\lambda^k_{\textnormal{\tiny GRANE}})$, where
\begin{equation}\tag{GRANE}
\textstyle \lambda_{\textnormal{\tiny GRANE}}=   1-\frac{1}{\theta_{\bs{F}_{\textnormal{a}}}^2/\mu_{\bs{F}_{\textnormal{a}}}^2}.~~\end{equation}
Instead, acc-GRANE is only  guaranteed to converge under the more restrictive assumption of (non-restricted) $\bar{\mu}_{\bs{F}_{\textnormal{a}}}$-strong monotonicity of 
the mapping $\bs{F}_{\textnormal{a}}$ (which requires some additional conditions on the game mapping, see \cite[Rem.~3]{TatarenkoShiNedic2018}), where $\bar{\mu}_{\bs{F}_{\textnormal{a}}}\leq \mu_{\bs{F}_{\textnormal{a}}}$,  with squared norm rate $\mc{O}(\lambda^k_{\textnormal{\tiny acc-GRANE}}) $, where 
\begin{equation} \tag{acc-GRANE}
	\textstyle ~~~~~~~~~~~~ \lambda_{\textnormal{\tiny acc-GRANE}}=1-\frac{1}{1+\theta_{\bs{F}_{\textnormal{a}}}/\bar{\mu}_{\bs{F}_{\textnormal{a}}}}.
\end{equation}
\allowbreak
 Finally, the convergence rate of  Algorithm~\ref{algo:1}, in squared \linebreak \pagebreak 
 \newline
 norm, is $\mc{O}(\lambda_{\textnormal{\tiny PPP}}^k) $, where
 \begin{align}
 \tag{PPP}
 \textstyle  \lambda_{\textnormal{\tiny PPP}} = \left( \frac{1}{1+{\mu_{\bs{F}_{\textnormal{a}}}}/\|\Phi\|}\right)^2.~~~~~~~
 \end{align}
 Since $\|\Phi\|=2$ due to Standing Assumption \ref{Ass:Graph2}, we get
\begin{subequations}
\begin{align}
 \lambda_{\textnormal{\tiny PPP}} 
\label{eq:rate1}
& = 1+ 1/(1+2/\mu_{\bs{F}_{\textnormal{a}}})^2-2/(1+2/\mu_{\bs{F}_{\textnormal{a}}})
  \\ 
  \label{eq:rate2}
& \leq   1-1/(1+2/\mu_{\bs{F}_{\textnormal{a}}}),
\end{align}
\end{subequations}
We note that $\uplambda_{2}(I-W)\leq2$, and that $\uplambda_{\textnormal{max}}(I-W)\geq 1$, if the self-loop weights are chosen small enough.
By the expression of $\alphamax$ in \eqref{eq:M1} and by picking $\alpha < \alphamax$ to ensure restricted strong monotonicity, it can be shown that $\alpha \bs{F}$ is $\frac{\uplambda_{2}(I-W)}{2}$-Lipschitz continuous, 
and in turn that
$3\geq \uplambda_{\textnormal{max}}(L)+\uplambda_{2}(L)/2 \geq \theta_{\bs{F}_{\textnormal{a}}}\geq \uplambda_{\textnormal{max}}(L)-\uplambda_{2}(L)/2 \geq 1/2$. 
%
%

We conclude that, when
$\mu_{\bs{F}_{\textnormal{a}}}$ is small (which is typically the case), Algorithm~\ref{algo:1} has a faster theoretical convergence rate than GRANE, by \eqref{eq:rate2}. Moreover, if $\theta_{\bs{F}_{\textnormal{a}}}>1$, then the guaranteed rate of Algorithm~\ref{algo:1} is better than that of  acc-GRANE by \eqref{eq:rate1}, despite our \gls{PPPA} is ensured to converge under milder conditions and requires only one communication per iteration instead of two.
%
%
%
%
%
%

\section{Numerical example: a connectivity problem}\label{sec:numerics}
 \setlength{\textfloatsep}{\textfloatsep-1em}
\begin{figure}[t]
	\centering
	\includegraphics[width=\columnwidth]{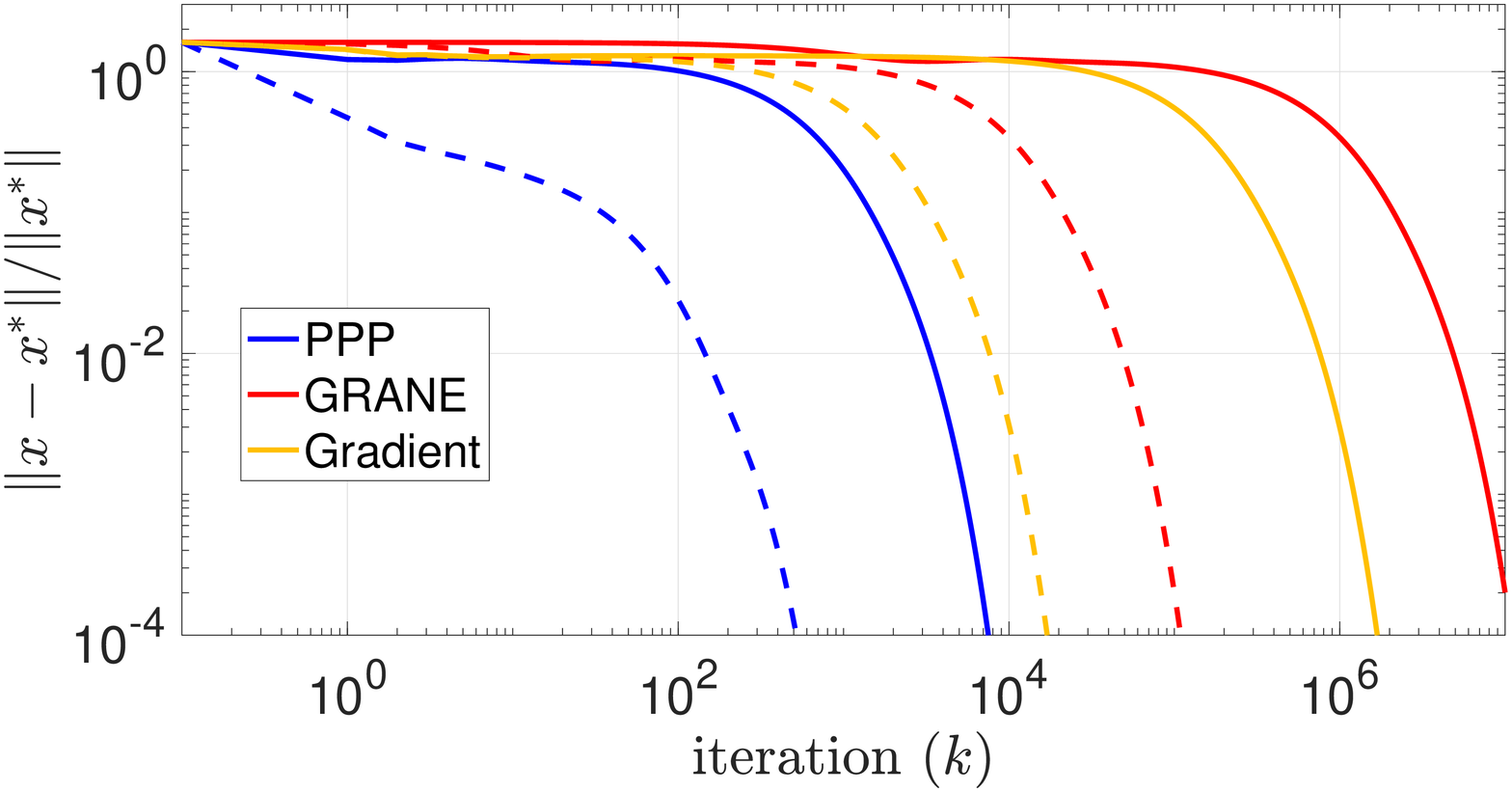}
	\caption{Comparison of \gls{PPPA}, gradient play and GRANE, with the theoretical step sizes that ensure convergence (solid lines) and with step sizes set 100 times larger than the theoretical upper bounds (dashed lines).}
	\label{fig:1}
\end{figure} 
%
%
We consider the connectivity problem described in \cite{Stankovic_Johansson_Stipanovic_2012}. Some mobile sensor devices have to coordinate their
actions via wireless communication, to perform some task, e.g., exploration or surveillance. Mathematically,  the sensors (agents) aim at autonomously  finding the positions which minimize some global cost. This can be robustly achieved by designing individual
cost functions for the agents,  such that the Nash equilibrium of the resulting game coincide with 
an optimum of the global objective  \cite{Stankovic_Johansson_Stipanovic_2012}.
Specifically, each agent $i$ of a group $\mc{I}=\{1,\dots,N \}$ is a mobile sensor moving on a plane,
designed to achieve some private primary objective related
to its Cartesian position $x_i\in \R^2$, provided that overall connectivity is preserved over the network. This is represented by the cost functions
$J_i(x_i,x_{-i})=q_i x_i^\top x_i +r_i x_i+\textstyle \sum_{j=1}^{N}m_{i,j}\|x_i-x_j\|^2,$
where $q_i>0$, $r_i$ and $m_{i,j}\geq 0$ are local parameters, for all $i,j\in\mc{I}$.
 The agents cannot measure the positions of the other sensors, but  communicate with some neighbors over a (randomly generated) communication network.
We set $N=10$, $m_{i,j}=1/N$ $\forall i,j\in\mc{I}$; we pick randomly with uniform distribution $q_i$ in $[1,2]$ and $r_i$ in $[-2,2]$. Because of the quadratic structure of the game and the choice of the parameters, all of our assumptions are satisfied. We compare the performance of Algorithm~\ref{algo:1} with that of some gradient-based \gls{NE} seeking algorithms proposed in the literature, for random initial conditions.
%

\emph{Unconstrained action sets}: 
We compare Algorithm~\ref{algo:1} with GRANE  \cite[Alg.~1]{TatarenkoShiNedic2018} and the gradient play in \cite[Eq.~7]{TatarenkoNedic2019_unconstrained}. We set $\alpha\approx 10^{-2}$, which satisfies the condition in Theorem~\ref{th:PPP}; for the other  algorithms we choose the best step sizes with theoretical convergence guarantees. Figure~\ref{fig:1} (solid lines) shows that
both gradient algorithms  are outperformed by far by our \gls{PPPA}. Indeed, both GRANE and the gradient play  are converging very slowly to the \gls{NE}, mostly due to the small step sizes employed.
 However, our numerical experience suggests that the theoretical bounds for the parameters are very conservative. Hence, we repeat the experiment by taking all the step sizes $100$ times bigger than their theoretical upper bounds (dashed lines). The convergence appears faster for all the algorithms, but Algorithm~\ref{algo:1} is still orders of magnitude better than the gradient-based schemes.
 
\emph{Constrained action sets}: 
We assume each coordinate of the position of each sensor to be constrained in the interval  $[0.1, 0.5 ]$. We test  Algorithm~\ref{algo:1} against GRANE \cite[Alg.~1]{TatarenkoShiNedic2018},  the inexact ADMM algorithm in \cite[Alg.~1]{SalehisadaghianiWeiPavel2019}, and 
acc-GRANE  \cite[Alg.~2]{TatarenkoShiNedic2018} (the latter is guaranteed to converge only under \emph{non-restricted} strong monotonicity of the mapping $\bs{F}_{\textnormal{a}}$ in \eqref{eq:Fa}: we check numerically that this condition holds for our case study).
For all the algorithms, we select  the best  step sizes with theoretical guarantees. 
The results are illustrated in Figure~\ref{fig:3}. 
\begin{figure}[t]
	\centering
	\includegraphics[width=\columnwidth]{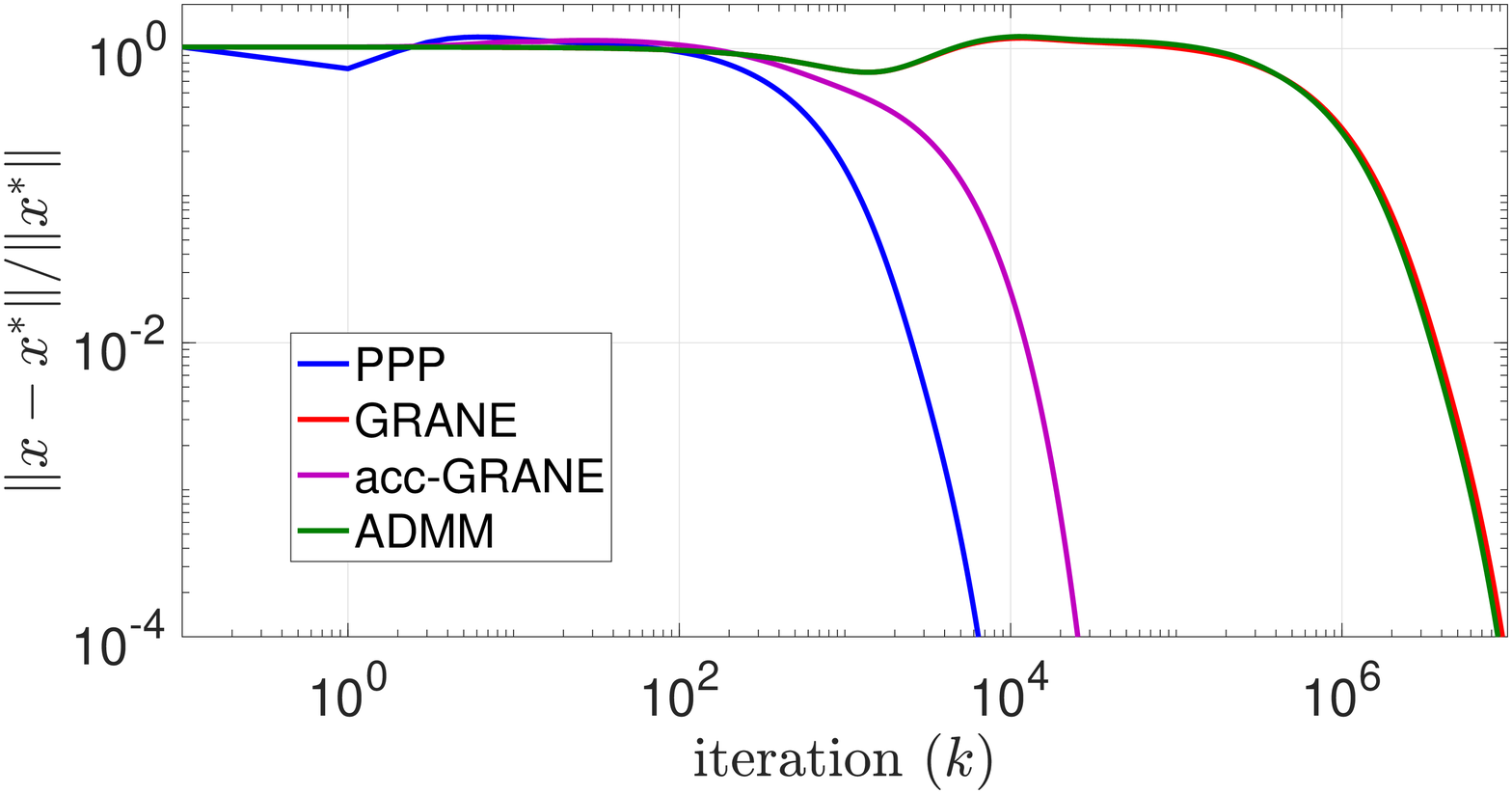}
	\caption{Comparison of \gls{PPPA}, GRANE, and acc-GRANE, with theoretical step sizes and compact action sets.}
	\label{fig:3}
\end{figure}

\begin{remark}
		{Our PPPA requires each agent to solve a strongly convex optimization problem at each iteration. While this can be efficiently done via iterative algorithms, it might be more demanding than a projected pseudo-gradient step, which in general requires to solve a strongly convex \emph{quadratic} optimization problem. Nonetheless, our simulations show that our method can drastically reduce the number of iterations, and thus of data transmissions, needed to converge. This is often advantageous, even at the price of an increased local computational effort, since communication between agents is typically expensive in terms of both time and energy consumption (especially in wireless systems).} \hfill $\square$
\end{remark}

\section{Conclusion and outlook}\label{sec:conclusion}
Nash equilibrium problems under partial-decision information can be solved via a fully-distributed preconditioned proximal-point algorithm, under strong monotonicity and Lipschitz continuity of the game mapping. Our algorithm has proven much faster than the existing gradient-based methods, at least in our numerical experience. The extension of our results to games with coupling constraints or played on time-varying communication networks is left as future research.

\appendix

\subsubsection{Proof of Lemma~\ref{lem:strmoninPhi}}\label{app:lem:strmoninPhi}
Let $(\x,{\bs{u}}), (\y,{\bs{v}})\in\gra({\Phi^{-1}\mc{A}})$ such that $\bs{y}\in\zer(\mc{A})=\zer(\Phi^{-1}\mc{A})$. By definition, $(\x,\Phi{\bs{u}}),(\y,\Phi{\bs{v}})\in\gra(\mc{A})$. Besides, the operator $\A$ in \eqref{eq:opA} is ${\rho_\alpha}$-restricted strongly monotone with respect to the consensus space $\bs{E}_{n}$, by Lemma~\ref{lem:strongmon_constant} and monotonicity of the normal cone \cite[Th.~20.25]{Bauschke2017}. Since $\bs{y} \in  \bs{E}_{n}$ by Lemma~\ref{lem:VIequivalence}, we can write:
\[
\begin{aligned}[b]
 \langle \bs{u}-\bs{v} , \x-\y \rangle_{\Phi} &= \langle \Phi\bs{u}-\Phi\bs{v} , \x-\y \rangle
\\
& \geq  {\rho_\alpha} \|\x-\y\|^2 \geq \textstyle \frac{ {\rho_\alpha} }{\|\Phi\|}\|\x-\y\|_\Phi^2.
\end{aligned} \QEDhereeqn
\]

\subsubsection{Proof of Theorem~\ref{th:PPP}}\label{app:th:PPP}
We start by noting that $\fix(\J_B)=\zer(B)$, since, for any $z\in\R^m$, $\0 _m\in \mc{B}(z) \iff z\in z+ \mc{B}(z) \iff (\Id+\mc{B})^{-1} (z)\ni z$.
Let $x\in\R^m$, $y\in\J_{\mc{B}}(x)=(\Id+\mc{B})^{-1}(x)$, $z\in\zer({\mc{B}})$. 
By definition of inverse operator,  $x-y \in \mc{B}(y)$;  therefore, by taking $v=\0_m$ in  \eqref{eq:thPPPrestricted},  we have
\begin{equation}
\begin{aligned}
&\langle y-z  , x-y-(z-z) \rangle_{\Psi}
\\
=& -\|y-z\|^2_{\Psi} +\langle y-z , x-z \rangle_{\Psi}\geq \tilde{\mu}\|y-z\|^2_{\Psi}.
\end{aligned}
\end{equation}
In turn, by the Cauchy-Schwarz inequality we obtain 
\begin{equation}\label{eq:usefulstep1}
	\|y-z\|_{\Psi} \|x-z\|_{\Psi} \geq (1+\tilde{\mu})\|y-z\|^2_{\Psi}.
	\end{equation}
Let us set $x=w^k$ and $y=w^{k+1}$. If $y\neq z$, by dividing both sides of \eqref{eq:usefulstep1} by $\|y-z\|_{\Psi}$, we obtain for the iteration in  \eqref{eq:PPPiter}:
\[
	\|\omega^{k+1}-z\|_{\Psi}\leq \left( \textstyle \frac{1}{1+\tilde{\mu}}\right)\|\omega^k-z\|_{\Psi};
\]
this trivially holds also if $y=z$.
By recursion, we conclude linear convergence of $(\omega^k)_{k\in \N}$ to $z$. As $z$ is an arbitrary point in $\zer(\mc{B})$, it also follows that $\zer(\mc{B})$ is a singleton. {\hfill $\blacksquare$}
\smallskip


\subsubsection{Proof of Theorem~\ref{th:main}}\label{app:th:main}
 Algorithm~\ref{algo:1} can be written as the  \gls{PPA} in \eqref{algo:1compact} by Lemma~\ref{lem:derivation}. Since $\zer(\A)\!=\! \zer(\Phi^{-1}  \A) \!=\! \{\x^*\}$ by Lemma~\ref{lem:VIequivalence},  $\dom(\J_{\Phi^{-1}\A})=\R^{Nn}$  by Remark~\ref{rem:rem1} and in view of  Lemma~\ref{lem:strmoninPhi}, we  can apply Theorem~\ref{th:PPP} to  conclude that
\[ \begin{aligned}[b]
\textstyle \sqrt{\uplambda_{\textnormal{min}}(\Phi)} \|\x^k-\x^*\| & \leq  \|\x^k-\x^*\|_{\Phi} 
\\ 
& 
 \leq  \left(\textstyle \frac{1}{1+{\rho_\alpha}/\|\Phi\|}\right)^{\!k}\|\x^0\! -\x^*\|_{\Phi} 
 \\ & \leq  \sqrt{\|\Phi \|}\left(\textstyle \frac{1}{1+{\rho_\alpha}/\|\Phi\|}\right)^{\!k}\|\x^0 \! -\x^*\|.
 \! 
\end{aligned}\QEDhereeqn \]


\bibliographystyle{IEEEtran}
\bibliography{library}

\begin{thebibliography}{10}
\providecommand{\url}[1]{#1}
\csname url@samestyle\endcsname
\providecommand{\newblock}{\relax}
\providecommand{\bibinfo}[2]{#2}
\providecommand{\BIBentrySTDinterwordspacing}{\spaceskip=0pt\relax}
\providecommand{\BIBentryALTinterwordstretchfactor}{4}
\providecommand{\BIBentryALTinterwordspacing}{\spaceskip=\fontdimen2\font plus
\BIBentryALTinterwordstretchfactor\fontdimen3\font minus
  \fontdimen4\font\relax}
\providecommand{\BIBforeignlanguage}[2]{{%
\expandafter\ifx\csname l@#1\endcsname\relax
\typeout{** WARNING: IEEEtran.bst: No hyphenation pattern has been}%
\typeout{** loaded for the language `#1'. Using the pattern for}%
\typeout{** the default language instead.}%
\else
\language=\csname l@#1\endcsname
\fi
#2}}
\providecommand{\BIBdecl}{\relax}
\BIBdecl

\bibitem{Palomar_Eldar_Facchinei_Pang_2009}
F.~Facchinei and J.~Pang, ``Nash equilibria: the variational approach,'' in
  \emph{Convex Optimization in Signal Processing and Communications}, D.~P.
  Palomar and Y.~C. Eldar, Eds.\hskip 1em plus 0.5em minus 0.4em\relax
  Cambridge University Press, 2009, p. 443–493.

\bibitem{Stankovic_Johansson_Stipanovic_2012}
M.~S. {Stankovic}, K.~H. {Johansson}, and D.~M. {Stipanovic}, ``Distributed
  seeking of {N}ash equilibria with applications to mobile sensor networks,''
  \emph{IEEE Transactions on Automatic Control}, vol.~57, no.~4, pp. 904--919,
  2012.

\bibitem{Grammatico2017}
S.~{Grammatico}, ``Dynamic control of agents playing aggregative games with
  coupling constraints,'' \emph{IEEE Transactions on Automatic Control},
  vol.~62, no.~9, pp. 4537--4548, 2017.

\bibitem{Li_Chen_Dahleh_2015}
N.~{Li}, L.~{Chen}, and M.~A. {Dahleh}, ``Demand response using linear supply
  function bidding,'' \emph{IEEE Transactions on Smart Grid}, vol.~6, no.~4,
  pp. 1827--1838, 2015.

\bibitem{Yu_VanderSchaar_Sayed_2017}
C.~{Yu}, M.~{van der Schaar}, and A.~H. {Sayed}, ``Distributed learning for
  stochastic generalized {N}ash equilibrium problems,'' \emph{IEEE Transactions
  on Signal Processing}, vol.~65, no.~15, pp. 3893--3908, 2017.

\bibitem{BelgioiosoGrammatico_ECC_2018}
G.~Belgioioso and S.~Grammatico, ``Projected-gradient algorithms for
  generalized equilibrium seeking in aggregative games are preconditioned
  forward-backward methods,'' in \emph{2018 European Control Conference}, 2018,
  pp. 2188--2193.

\bibitem{Shamma_Arslan_2005}
J.~S. {Shamma} and G.~{Arslan}, ``Dynamic fictitious play, dynamic gradient
  play, and distributed convergence to {N}ash equilibria,'' \emph{IEEE
  Transactions on Automatic Control}, vol.~50, no.~3, pp. 312--327, 2005.

\bibitem{YiPavel2019}
P.~Yi and L.~Pavel, ``An operator splitting approach for distributed
  generalized {N}ash equilibria computation,'' \emph{Automatica}, vol. 102, pp.
  111 -- 121, 2019.

\bibitem{Ghaderi_2014}
J.~Ghaderi and R.~Srikant, ``Opinion dynamics in social networks with stubborn
  agents: Equilibrium and convergence rate,'' \emph{Automatica}, vol.~50,
  no.~12, pp. 3209 -- 3215, 2014.

\bibitem{Bimpikis2014}
K.~Bimpikis, S.~Ehsani, and R.~Ilkili\c{c}, ``Cournot competition in networked
  markets,'' in \emph{15th ACM Conference on Economics and Computation}, ser.
  EC '14.\hskip 1em plus 0.5em minus 0.4em\relax ACM, 2014, p. 733.

\bibitem{Koshal_Nedic_Shanbag_2016}
J.~Koshal, A.~{Nedić}, and U.~V. Shanbhag, ``Distributed algorithms for
  aggregative games on graphs,'' \emph{Operations Research}, vol.~64, pp.
  680--704, 2016.

\bibitem{FrihaufKrsticBasar_2012}
P.~{Frihauf}, M.~{Krstic}, and T.~{Basar}, ``Nash equilibrium seeking in
  noncooperative games,'' \emph{IEEE Transactions on Automatic Control},
  vol.~57, no.~5, pp. 1192--1207, 2012.

\bibitem{DePersisGrammatico2018}
C.~{De Persis} and S.~Grammatico, ``Distributed averaging integral {N}ash
  equilibrium seeking on networks,'' \emph{Automatica}, vol. 110, p. 108548,
  2019.

\bibitem{GadjovPavel2018}
D.~{Gadjov} and L.~{Pavel}, ``A passivity-based approach to {N}ash equilibrium
  seeking over networks,'' \emph{IEEE Transactions on Automatic Control},
  vol.~64, no.~3, pp. 1077--1092, 2019.

\bibitem{Salehisadaghiani_Pavel_GOSSIP}
F.~Salehisadaghiani and L.~Pavel, ``Distributed {N}ash equilibrium seeking: A
  gossip-based algorithm,'' \emph{Automatica}, vol.~72, pp. 209 -- 216, 2016.

\bibitem{SalehisadaghianiWeiPavel2019}
F.~Salehisadaghiani, W.~Shi, and L.~Pavel, ``Distributed {N}ash equilibrium
  seeking under partial-decision information via the alternating direction
  method of multipliers,'' \emph{Automatica}, vol. 103, pp. 27 -- 35, 2019.

\bibitem{TatarenkoShiNedic2018}
T.~Tatarenko, W.~Shi, and A.~{Nedić}, ``Geometric convergence of gradient play
  algorithms for distributed {N}ash equilibrium seeking,'' \emph{arXiv preprint
  arXiv:1809.07383}, 2018.

\bibitem{Pavel2018}
L.~{Pavel}, ``Distributed {GNE} seeking under partial-decision information over
  networks via a doubly-augmented operator splitting approach,'' \emph{IEEE
  Transactions on Automatic Control}, vol.~65, no.~4, pp. 1584--1597, 2020.

\bibitem{Bauschke2017}
H.~H. Bauschke and P.~L. {Combettes}, \emph{Convex analysis and monotone
  operator theory in Hilbert spaces}.\hskip 1em plus 0.5em minus 0.4em\relax
  Springer, 2017, vol. 2011.

\bibitem{YiPavel_PPP_2019}
P.~{Yi} and L.~{Pavel}, ``Distributed generalized {N}ash equilibria computation
  of monotone games via double-layer preconditioned proximal-point
  algorithms,'' \emph{IEEE Transactions on Control of Network Systems}, vol.~6,
  no.~1, pp. 299--311, 2019.

\bibitem{Lei_Shanbag_CDC2018}
J.~{Lei} and U.~V. {Shanbhag}, ``Linearly convergent variable sample-size
  schemes for stochastic {N}ash games: Best-response schemes and distributed
  gradient-response schemes,'' in \emph{2018 IEEE Conference on Decision and
  Control (CDC)}, 2018, pp. 3547--3552.

\bibitem{Bianchi_LCSS2020}
M.~{Bianchi} and S.~{Grammatico}, ``Fully distributed {N}ash equilibrium
  seeking over time-varying communication networks with linear convergence
  rate,'' \emph{IEEE Control Systems Letters}, vol.~5, no.~2, pp. 499--504,
  2021.

\bibitem{FacchineiPang2007}
F.~Facchinei and J.~Pang, \emph{Finite-dimensional variational inequalities and
  complementarity problems}.\hskip 1em plus 0.5em minus 0.4em\relax Springer,
  2007.

\bibitem{TatarenkoNedic2019_unconstrained}
T.~Tatarenko and A.~{Nedić}, ``Geometric convergence of distributed gradient
  play in games with unconstrained action sets,'' \emph{arXiv preprint
  arXiv:1907.07144}, 2019.

\end{thebibliography}
\end{document}